\definecolor{red}{rgb}{1.00,0.00,0.00}
{\numberwithin{equation}{section}
\setlength{\parindent}{1em}

\newtheorem{theorem}{Theorem}[section]
\newtheorem{lemma}{Lemma}[section]

\newcommand{\normmm}[1]{{\left\vert\kern-0.25ex\left\vert
\kern-0.25ex\left\vert #1
    \right\vert\kern-0.25ex\right\vert\kern-0.25ex\right\vert}}

\geometry{left=3cm,right=3cm,top=4cm,bottom=2.5cm}

\begin{document}           

\title{Adaptive staggered DG method for \\Darcy flows in fractured porous media}
\author{Lina Zhao\footnotemark[1]\qquad
    \;Eric Chung\footnotemark[4]}
\renewcommand{\thefootnote}{\fnsymbol{footnote}}
\footnotetext[1]{Department of Mathematics,The Chinese University of Hong Kong, Hong Kong SAR, China. ({lzhao@math.cuhk.edu.hk}).}
\footnotetext[2]{Department of Mathematics, The Chinese University of Hong Kong, Hong Kong SAR, China. ({tschung@math.cuhk.edu.hk}).}

\maketitle

\textbf{Abstract:}
Modeling flows in fractured porous media is important in applications. One main challenge in numerical simulation is that the flow is strongly influenced by the fractures, so that
the solutions typically contain complex features, which require high computational grid resolutions.
Instead of using uniformly fine mesh, a more computationally efficient adaptively refined mesh is desirable.
In this paper we design and analyze a novel residual-type a posteriori error estimator for staggered DG methods on general polygonal meshes for Darcy flows in fractured porous media. The method can handle fairly general meshes and hanging nodes can be simply incorporated into the construction of the method, which is highly appreciated for adaptive mesh refinement. The reliability and efficiency of the error estmator are proved. The derivation of the reliability hinges on the stability of the continuous setting in the primal formulation. A conforming counterpart that is continuous within each bulk domain for the discrete bulk pressure is defined to facilitate the derivation of the reliability. Finally, several numerical experiments including multiple non-intersecting fractures are carried out to confirm the proposed theories.

\textbf{Keywords:} Staggered DG method, a posteriori error estimator, general polygonal meshes, hanging nodes, fractured porous media

\pagestyle{myheadings} \thispagestyle{plain} \markboth{LinaEric}
    {Adaptive SDG method for Darcy flows in fractured porous media}

\section{Introduction}

Modeling flows in fractured porous media is of great importance thanks to its wide applications in many of the environmental and energy problems. In many of the applications the flow is strongly influenced by the presence of the fractures and it is challenging to effectively model the interaction between the system of fractures and the porous matrix. A popular choice for handling this problem is to treat fractures as $(d-1)$-dimensional interfaces between $d$-dimensional porous matrices, $d=2,3$. The development of this kind of reduced models has been addressed for single-phase Darcy flow \cite{Alboin02,Martin05,Frih08}, where the fracture flow equations and the proper interface conditions across the fractures are identified.

Numerous numerical methods have been developed for the approximation of the coupled bulk-fracture model, such as finite difference method, finite volume method, the Galerkin finite element method and mixed finite element method \cite{Monteagudo04,Hoteit08,DAngelo12}. Recently, polygonal methods have drawn great attention and several methods have been successfully applied to solve Darcy flows in fractured porous media, in this context we mention virtual element method, mimetic finite difference method, hybrid high-order method, discontinuous Galerkin method and staggered DG method \cite{Benedetto14,AntoniettiMF16,Chave18,Antonietti19,LinaDohyun20}. All these methods allow arbitrary shapes of polygon, which can greatly simplify the meshing process since the background grids can be generated independently of the fractures. Importantly, these methods allow hanging nodes, which is highly appreciated for adaptive mesh refinement.

Introduced in \cite{EricEngquistwave06,ChungWave}, staggered discontinuous Galerkin (DG) methods are new generation discretisation methods for PDEs based on discrete unknowns that enjoy staggered continuity properties. Inspired by the work given in \cite{EricEngquistwave06,ChungWave}, a large number of works have been dedicated to apply staggered DG methods to PDEs arising from practical applications \cite{Cheung15,ChungKimWid13,EricCiarYu13,KimChungLee13,LeeKim16,ChungQiu17,ChungLamQian15}. Recently, staggered DG method has been successfully design on fairly general polygonal meshes to solve Darcy law and the Stokes equations \cite{LinaPark,LinaParkShin}. It is further developed to solve the coupled Stokes and Darcy problem by properly enforcing the interface conditions \cite{LinaParkcoupledSD}. Another important contribution made by the authors is to relax the tangential continuity for velocity \cite{LinaEricLam20} so that the modified method is uniformly stable for Brinkman problem. Staggered DG methods designed therein earn many salient features, including:
1) It can be flexibly applied to general polygonal meshes with automatic treatment of the hanging nodes;
2) superconvergence can be obtained with suitable projection operator;
3) local mass conservations can be preserved, which is highly appreciated for the practical applications especially for the simulation of multiphase flow.
In addition, the mass matrix is block diagonal which is desirable when explicit time stepping schemes are used;
4) no numerical flux or penalty term is needed in contrast to other DG methods.
It is worth mentioning that staggered DG methods allow arbitrarily small edges \cite{LinaDohyun20}, which is important for the simulation of practical problems that encounter very irregular geometries such as cracking and gluing.
A relation of staggered DG method and the hybridized DG method is presented in \cite{ChungCockburnFu14,ChungCockburnFu16}.

Adaptive mesh refinement is an efficient procedure which can improve the quality of numerical approximations with minimal effort in particular for problems encounter singularities. The adaptive finite element method based on a posteriori error estimator is pioneered by Babu\v{s}ka and Rheinboldt \cite{BR78,Babuska78}. Since then a large number of works have been dedicated to a posteriori error estimators for second order elliptic problems, see e.g., \cite{Alonso95,Verfurth96,Bernadi00,Bress08,Braess96,Carsten97,
kim07,martin07,kipa-upwind08,LarsonAxel08,kimpark-sinum10,Vohralik10,ckp11,Ern15,ChungParkLina18}.
All the aforementioned error estimators are designed on triangular meshes, and the derivation of a posteriori error estimator for polygonal meshes is non-trivial and only a few works are available, in this direction one can refer to \cite{Beir08,Beir15,Cangiani16,Berrone17}. Deriving a posteriori error estimator for coupled bulk-fracture model is still in its infancy and only two works based on triangular meshes are available \cite{chen2016,ChenSun17}. To the best of our knowledge, no a posteriori error estimates for coupled bulk-fracture model based on general polygonal meshes have been studied in the literature so far. Therefore, the objective of this paper is to derive and analyze a novel residual-type a posteriori error estimator for staggered DG method proposed in \cite{LinaDohyun20} for Darcy flows in fractured porous media on general polygonal meshes.

In the formulation proposed in \cite{LinaDohyun20}, staggered DG method is used for the bulk domain and continuous finite element method is employed for the fracture model. The method can handle very general polygonal meshes and allows hanging nodes, which makes it desirable for adaptive mesh refinement. The derivation of a posteriori error estimator for this coupled model is non-trivial due to the coupling conditions imposed on the interface. To attack this issue, we decompose the discretization error into conforming part and nonconforming part via defining a conforming counterpart for the discrete bulk pressure. The nonconforming component can be estimated by using standard estimates and the upper bound for the conforming part hinges on the stability estimates derived for the continuous model in the primal formulation. Note that the conforming counterpart of the discrete solution is defined to be continuous within each bulk domain and no continuity is imposed for the fracture region. We can derive the upper bound for conforming part by using the residual equations obtained from the discretization error by naturally linking the discrete formulation and continuous formulation. The standard bubble functions are exploited to prove the efficiency of the proposed error estimator. Finally, several numerical experiments are tested, where we also include one example with multiple non-intersecting fractures. We find out that optimal convergence rates can be recovered by using adaptive mesh refinement guided by our error estimator. It is worth mentioning that our approach is different from the one used in \cite{ChenSun17}, where the partial continuous inf-sup condition is used. To the best of our knowledge, this is the first work on a posteriori error estimator for fractured porous media on polygonal meshes. We emphasize that our proof is quite general, thus it can be easily adjusted for other discretizations as well.

The rest of the paper is organized as follows. In the next section, we briefly introduce the model problem and describe the numerical scheme. Then in Section~\ref{sec:estimator}, the reliability and efficiency of the proposed error estimator are proved. Several numerical experiments are carried out in Section~\ref{sec:numerical} to verify the proposed theories. Finally, a conclusion is given.

%
%
%

\section{Description of staggered DG method}

In this section we first describe the model problem considered in this paper, then staggered DG discretization for the model problem is provided. We end by showing the stability of the continuous formulation by making use of the primal formulation.
\subsection{Model problem}
We consider a porous medium saturated by an incompressible fluid that occupies the space region $\Omega\subset \mathbb{R}^2$ and is crossed by a single fracture $\Gamma$. We focus our analysis on single fracture to avoid technical difficulties and the extension to multiple non-intersecting fractures is verified in our numerical simulation.
Here, $\Omega_B:=\Omega\backslash \bar{\Gamma}$ represents the bulk region and can be decomposed as $\Omega_B:=\Omega_{B,1}\cup \Omega_{B,2}$.
In addition, we denote by $\partial \Omega_B:=\bigcup_{i=1}^2 \partial \Omega_{B,i}\backslash \bar{\Gamma}$ and denote by $\partial \Gamma$ the boundary of fracture $\Gamma$.
$\bm{n}_\Gamma$ denotes a unit normal vector to $\Gamma$ with a fixed orientation.
The schematic of the bulk and fracture domain is illustrated in Figure~\ref{fig:bulkdomain}.
Without loss of generality, we assume in the following that the subdomains are numbered so that $\bm{n}_\Gamma$ coincides with the outward normal direction of $\Omega_{B,1}$.

In the bulk region, we model the motion of the incompressible fluid by Darcy's law in mixed form, so that the pressure $p: \Omega_B\rightarrow \mathbb{R}$ and the flux $\bm{u}: \Omega_B \rightarrow \mathbb{R}^2$ satisfy
\begin{align}
    \bm{u}+K\nabla p    & =\bm{0}\quad \mbox{in}\;\Omega_B,\label{eq:bulk1} \\
    \nabla \cdot \bm{u} & =f\quad \mbox{in}\;\Omega_B,\label{eq:bulk2} \\
    p                   & =p_0\quad \mbox{on}\; \partial \Omega_B.
\end{align}
Here, $p_0\in H^{\frac{1}{2}}(\partial \Omega_B)$ the boundary pressure, and $K:\Omega_B \rightarrow \mathbb{R}^{2\times 2}$ the bulk permeability tensor, which is assumed to be a symmetric, piecewise constant.
For the sake of simplicity we assume that $K$ is isotropic and positive definite.

Inside the fracture, we consider the motion of the fluid as governed by Darcy's law in primal form, so that the fracture pressure $p_\Gamma: \Gamma\rightarrow \mathbb{R}$ satisfies
\begin{equation}
    \begin{aligned}
        -\nabla_t \cdot (K_\Gamma \nabla_t p_\Gamma)
         & =\ell_{\Gamma}f_\Gamma+[\bm{u}\cdot \bm{n}_\Gamma]
         &                                                    & \mbox{in}\; \Gamma,          \\
        p_\Gamma
         & = g_\Gamma
         &                                                    & \mbox{on}\; \partial \Gamma,
    \end{aligned}
    \label{eq:fracture}
\end{equation}
where $f_\Gamma\in L^2(\Gamma)$ and $K_\Gamma: = \kappa_\Gamma^*\ell_\Gamma$ with $\kappa_\Gamma^*: \Gamma\rightarrow \mathbb{R}$ and $\ell_\Gamma:\Gamma\rightarrow \mathbb{R}$ denoting the tangential permeability and thickness of the fracture, respectively.
The quantities $\kappa_\Gamma^*$ and $\ell_\Gamma$ are assumed to be piecewise constants.
Here, $\nabla_t\cdot$ and $\nabla_t$ denote the tangential divergence and gradient operators along $\Gamma$, respectively.
For the sake of simplicity, we assume $p_0=0$, $g_\Gamma=0$ in the analysis.

The above problems are coupled by the following interface conditions
\begin{equation}
    \begin{aligned}
        \eta_\Gamma \{\bm{u}\cdot\bm{n}_\Gamma\} & =[p]            &  & \mbox{on}\;\Gamma, \\
        \alpha_\Gamma[\bm{u}\cdot\bm{n}_\Gamma]  & =\{p\}-p_\Gamma &  & \mbox{on}\;\Gamma,
    \end{aligned}\label{eq:interface}
\end{equation}
where we set
\begin{equation*}
    \eta_\Gamma: =\frac{\ell_\Gamma}{\kappa_\Gamma^n},\quad \alpha_\Gamma:=\eta_\Gamma(\frac{\xi}{2}-\frac{1}{4}).
\end{equation*}
Here $\xi\in (\frac{1}{2},1]$ is a model parameter, and $\kappa_\Gamma^n: \Gamma\rightarrow \mathbb{R}$ represents the normal permeability of the fracture, which is assumed to be a piecewise constant.
We assume that there exists positive constants $\kappa_1^*,\kappa_2^*,\kappa_1^n,\kappa_2^n$ such that, almost everywhere on $\Gamma$,
\begin{equation*}
    \kappa_1^*\leq \kappa_\Gamma^*\leq \kappa_2^*,\quad \kappa_1^n\leq \kappa_\Gamma^n\leq \kappa_2^n.
\end{equation*}

\begin{figure}
    \centering
    \includegraphics[width=0.5\textwidth]{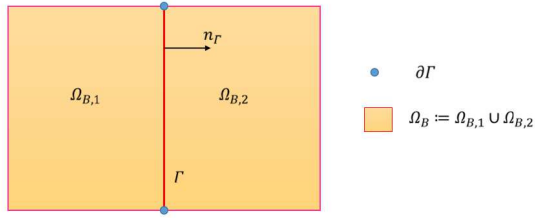}
    \caption{Illustration of bulk and fracture domain.}
    \label{fig:bulkdomain}
\end{figure}

To ease later analysis, we define $U=\{\psi\in H^1(\Omega_{B,1})\cap H^1(\Omega_{B,2}):\psi \mid_{\partial \Omega_B}=0\}$, $V_\Gamma=H^1_0(\Gamma)$, $V=U\times V_\Gamma$ and $Q=[L^2(\Omega_B)]^2$. Thereby we can propose the following weak formulation by employing integration by parts and the interface conditions \eqref{eq:interface}: Find $(\bm{u},p,p_\Gamma)\in Q\times U\times V_\Gamma$ such that
\begin{align}
        (K^{-1} \bm{u}, \bm{v})_{\Omega_B}+(\bm{v},\nabla p)_{\Omega_B}&=0\quad \forall \bm{v}\in Q,\label{eq:weak1}\\
       \sum_{e\in \mathcal{F}_h^\Gamma}\langle \frac{1}{\alpha_\Gamma}(\{p\}-p_{\Gamma}),\{q\}\rangle_e+\sum_{e\in \mathcal{F}_h^\Gamma}\langle \frac{1}{\eta_\Gamma}[p],[q]\rangle_e-(\bm{u},\nabla q)_{\Omega_B}&=(f,q)_{\Omega_B}\quad \forall q\in U,\label{eq:weak2}\\
        \langle K_\Gamma\nabla_t p_{\Gamma}, \nabla_t q_{\Gamma}\rangle_\Gamma-\sum_{e\in \mathcal{F}_h^\Gamma}\langle\frac{1}{\alpha_\Gamma}(\{p\}-p_{\Gamma}),q_{\Gamma}\rangle_e
        &=\langle\ell_\Gamma f_\Gamma, q_{\Gamma}\rangle_\Gamma\quad \forall q_\Gamma\in V_\Gamma.\label{eq:weak3}
\end{align}

Let
\begin{align*}
a(\bm{u},\bm{v}) &= (K^{-1}\bm{u},\bm{v})_{\Omega_B}, \quad b(\bm{v},p) = (\bm{v},\nabla p)_{\Omega_B},\quad c(p_\Gamma,q_\Gamma)=\langle K_\Gamma\nabla_t p_{\Gamma}, \nabla_t q_{\Gamma}\rangle_\Gamma,\\
I((p,p_\Gamma),(q,q_\Gamma)) & =\sum_{e\in \mathcal{F}_h^\Gamma}\langle \frac{1}{\alpha_\Gamma}(\{p\}-p_{\Gamma}),\{q\}-q_\Gamma\rangle_e+\sum_{e\in \mathcal{F}_h^\Gamma}\langle \frac{1}{\eta_\Gamma}[p],[q]\rangle_e
\end{align*}
and define
\begin{align*}
A((\bm{u},p,p_\Gamma),(\bm{v},q,q_\Gamma)): &= a(\bm{u},\bm{v})+b(\bm{v},p)
+I((p,p_{\Gamma}),(q,q_\Gamma))-b(\bm{u},q)+c(p_{\Gamma},q_\Gamma).
\end{align*}
Then \eqref{eq:weak1}-\eqref{eq:weak3} can be rewritten as: Find $(\bm{u},p,p_\Gamma)\in Q\times U\times V_\Gamma$ such that
\begin{align}
A((\bm{u},p,p_\Gamma),(\bm{v},q,q_\Gamma))=(f,q)_{\Omega_B}+\langle\ell_\Gamma f_\Gamma, q_{\Gamma}\rangle_\Gamma\quad \forall (\bm{v},q,q_\Gamma)\in Q\times U\times V_\Gamma.\label{eq:A}
\end{align}

Before closing this subsection, we introduce some notations that will be employed throughout the paper.
Let $D\subset \mathbb{R}^d,$ $d=1,2$, we adopt the standard notations for the Sobolev spaces $H^s(D)$ and their associated norms $\|\cdot\|_{s,D}$, and semi-norms $|\cdot|_{s,D}$ for $s\geq 0$.
The space $H^0(D)$ coincides with $L^2(D)$, for which the norm is denoted as $\|\cdot\|_{D}$.
We use $(\cdot,\cdot)_D$ to denote the inner product for $d=2$ and $\langle\cdot,\cdot\rangle_D$ for $d=1$.
In the sequel, we use $C$ to denote a generic positive constant which may have different values at different occurrences.

\subsection{Staggered DG method}

In this subsection, we begin with introducing the construction of our staggered DG spaces, in line with this we then present the staggered DG method for the model problem \eqref{eq:bulk1}-\eqref{eq:interface}.
We consider a family of meshes $\mathcal{T}_u$ made of disjoint polygonal (primal) elements which are aligned with the fracture $\Gamma$ so that any element $T\in \mathcal{T}_u$ can not be cut by $\Gamma$. We remark that our method can be easily adjusted for unfitted background grids, but we focus on the fitted case for simplicity.
Note that, since $\Omega_{B,1}$ and $\Omega_{B,2}$ are disjoint, each element $T$ belongs to one of the two subdomains.
The union of all the edges excluding the edges lying on the fracture $\Gamma$ in the decomposition $\mathcal{T}_u$ is called primal edges, which is denoted as $\mathcal{F}_u$.
Here we use $\mathcal{F}_u^0$ to stand for the subset of $\mathcal{F}_u$, that is the set of edges in $\mathcal{F}_{u}$ that do not lie on $\partial\Omega_B$.
In addition, we use $\mathcal{F}_h^\Gamma$ to denote the one-dimensional mesh of the fracture $\Gamma$.
For the construction of staggered DG method, we decompose each element $T \in \mathcal{T}_u$ into the union of triangles by connecting the interior point $\nu$ of $T$ to all the vertices.
Here the interior point $\nu$ is chosen as the center point for simplicity.
We rename the union of these sub-triangles by $S(\nu)$ to indicate that the triangles sharing common vertex $\nu$.
In addition, the resulting simplicial sub-meshes are denoted as $\mathcal{T}_h$.
Moreover, some additional edges are generated in the subdivision process due to the connection of $\nu$ to all the vertices of the primal element, and these edges are denoted by  $\mathcal{F}_p$.
For each triangle $\tau\in \mathcal{T}_h$, we let $h_\tau$ be the diameter of $\tau$ and $h=\max\{h_\tau, \tau\in \mathcal{T}_h\}$.
In addition, we define $\mathcal{F}:=\mathcal{F}_{u}\cup \mathcal{F}_{p}$ and $\mathcal{F}^{0}:=\mathcal{F}_{u}^{0}\cup \mathcal{F}_{p}$.
The construction for general meshes is illustrated in Figure~\ref{grid}, where the black solid lines are edges in $\mathcal{F}_{u}$ and black dotted lines are edges in $\mathcal{F}_{p}$.

Finally, we construct the dual mesh.
For each interior edge $e\in \mathcal{F}_{u}^0$, we use $D(e)$ to represent the dual mesh, which is the union of the two triangles in $\mathcal{T}_h$ sharing the edge $e$.
For each edge $e\in(\mathcal{F}_{u}\backslash\mathcal{F}_{u}^0)\cup \mathcal{F}_h^\Gamma$, we use $D(e)$ to denote the triangle in $\mathcal{T}_h$ having the edge $e$, see Figure~\ref{grid}.

For each edge $e$, we define a unit normal vector $\bm{n}_{e}$ as follows:
If $e\in \mathcal{F}\backslash \mathcal{F}^{0}$, then $\bm{n}_{e}$ is the unit normal vector of $e$ pointing towards the outside of $\Omega$.
If $e\in \mathcal{F}^{0}$, an interior edge, we then fix $\bm{n}_{e}$ as one of the two possible unit normal vectors on $e$.
When there is no ambiguity, we use $\bm{n}$ instead of $\bm{n}_{e}$ to simplify the notation.

We assume that our initial partition $\mathcal{T}_u$ satisfies the following mesh regularity assumptions (cf. \cite{Beir13,Cangiani16}):
\begin{description}
    \item[Assumption (A)] Every element $S(\nu)$ in $\mathcal{T}_{u}$ is star-shaped with respect to a ball of radius $\geq \rho_S h_{S(\nu)}$, where $\rho_S$ is a positive constant and $h_{S(\nu)}$ denotes the diameter of $S(\nu)$.
    \item[Assumption (B)] For every element $S(\nu)\in \mathcal{T}_{u}$ and every edge $e\in \partial S(\nu)$, it satisfies $h_e\geq \rho_E h_{S(\nu)}$, where $\rho_E$ is a positive constant and $h_e$ denotes the length of edge $e$.
\end{description}
We remark that Assumption (A) and (B) can guarantee that the triangulation $\mathcal{T}_h$ is shape regular.

\begin{figure}
    \centering
    \includegraphics[width=0.35\textwidth]{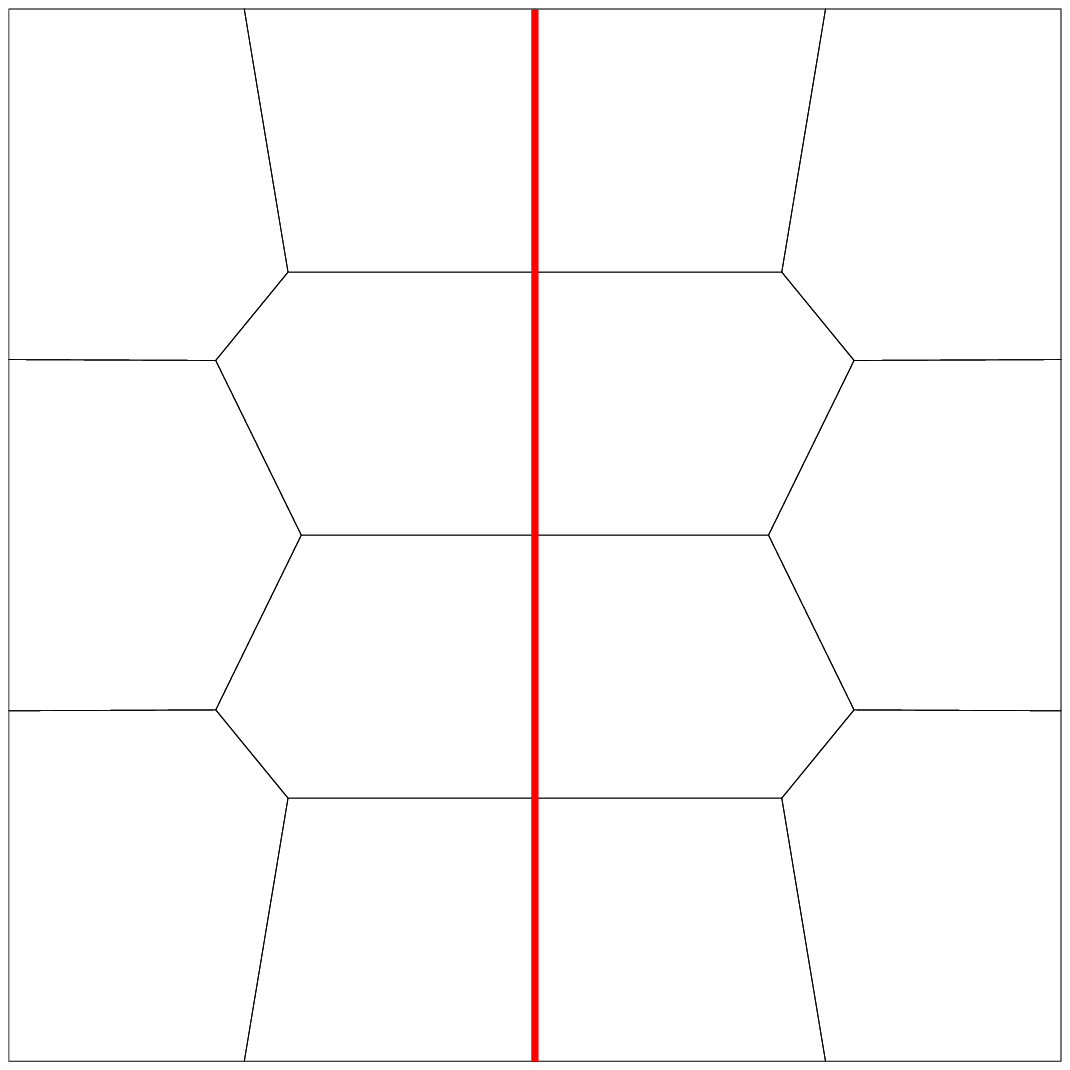}\hspace{1em}
    \includegraphics[width=0.35\textwidth]{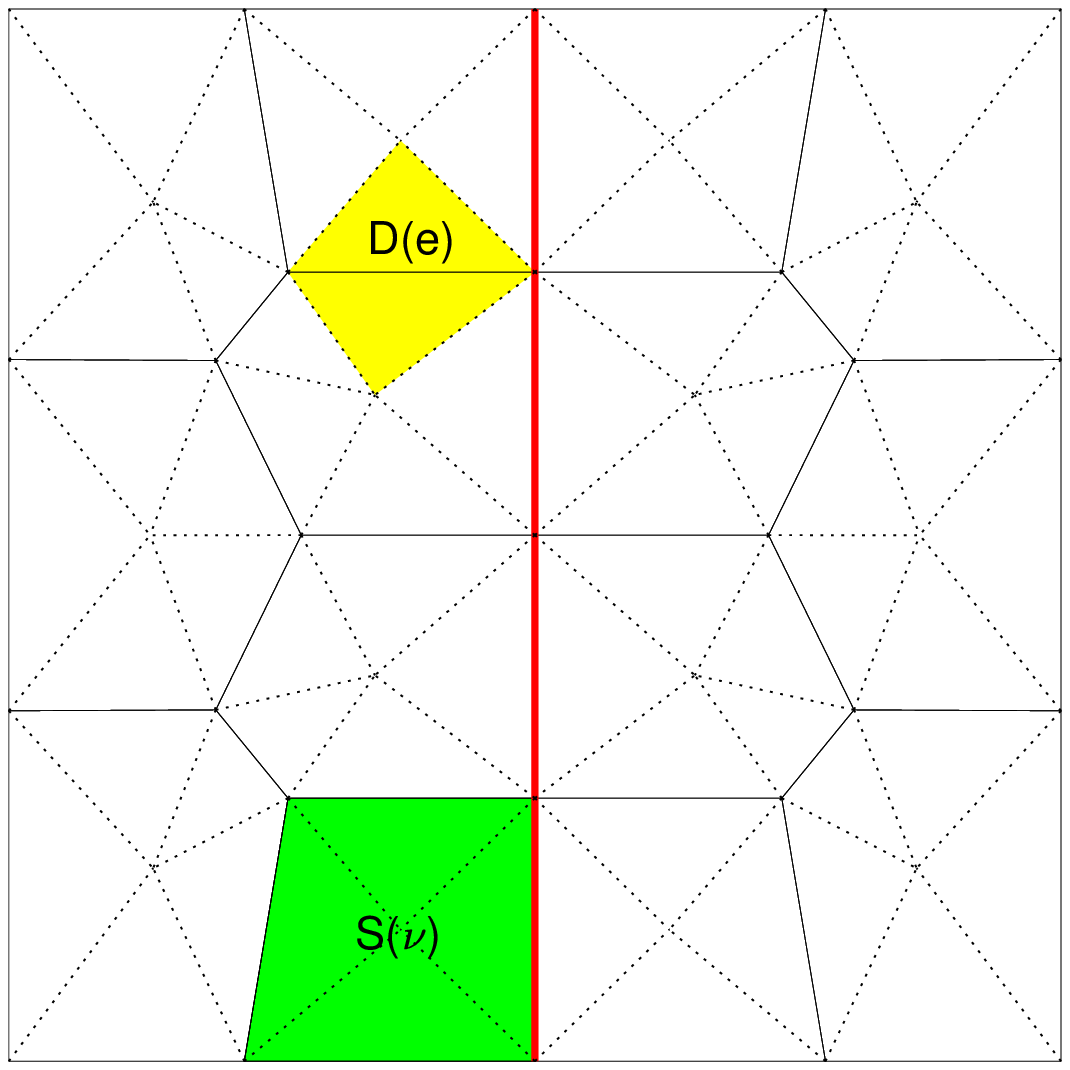}
    \caption{Schematic of the primal mesh $S(\nu)$, the dual mesh $D(e)$ and the primal simplicial sub-meshes.}
    \label{grid}
\end{figure}


Let $k\geq 0$ be the order of approximation. For every $\tau \in \mathcal{T}_{h}$ and $e\in\mathcal{F}$, we define $P^{k}(\tau)$ and $P^{k}(e)$ as the spaces of polynomials of degree less than or equal to $k$ on $\tau$ and $e$, respectively.
For $q$ and $\bm{v}$ belonging to the broken Sobolev space the jump $[q]\mid_e$ and the jump $[\bm{v}\cdot\bm{n}]\mid_e$ over $e\in \mathcal{F}^0\cup \mathcal{F}_h^\Gamma$ are defined respectively as
\begin{equation*}
    [q]=q_{1}-q_{2}, \quad [\bm{v}\cdot\bm{n}]=\bm{v}_{1}\cdot\bm{n}-\bm{v}_{2}\cdot\bm{n},
\end{equation*}
where $q_{i}=q\mid_{\tau_{i}}$, $\bm{v}_{i}=\bm{v}\mid_{\tau_{i}}$ and $\tau_{1}$, $\tau_{2}$ are the two triangles in $\mathcal{T}_h$ having the edge $e$.
Moreover, for $e\in \mathcal{F}\backslash \mathcal{F}^0$, we define $[q]=q_1$.
In the above definitions, we assume $\bm{n}$ is pointing from $\tau_1$ to $\tau_2$.

Similarly, we define the average $\{q\}\mid_e$ and the average $\{\bm{v}\cdot\bm{n}\}\mid_e$ over $e\in \mathcal{F}^0\cup \mathcal{F}_h^\Gamma$ by
\begin{align*}
    \{q\}=\frac{q_{1}+q_{2}}{2}, \quad \{\bm{v}\cdot\bm{n}\}=\frac{\bm{v}_{1}\cdot\bm{n}+\bm{v}_{2}\cdot\bm{n}}{2},
\end{align*}
where $q_{i}=q\mid_{\tau_{i}}$, $\bm{v}_{i}=\bm{v}\mid_{\tau_{i}}$ and $\tau_{1}$, $\tau_{2}$ are the two triangles in $\mathcal{T}_h$ having the edge $e$.

Next, we will introduce some finite dimensional spaces.
First, we define the following locally $H^{1}(\Omega)$ conforming space $S_h$:
\begin{equation*}
    S_{h}:=\{q :
    q\mid_{\tau}\in P^{k}(\tau)\;
    \forall \tau \in \mathcal{T}_{h};\;[q]\mid_e=0\;\forall e\in\mathcal{F}_{u}^{0};
    \;q\mid_{\partial \Omega_B}=0\}.
\end{equation*}
Notice that, if $q\in S_h$, then $q\mid_{D(e)}\in H^1(D(e))$ for each edge $e\in (\mathcal{F}_{u}\cup \mathcal{F}_h^\Gamma)$ and no continuity is imposed across $e\in \mathcal{F}_h^\Gamma$ for function $q\in S_h$.

We next define the following locally $H(\mbox{div};\Omega)-$conforming space $\bm{V}_h$:
\begin{equation*}
    \bm{V}_{h}=\{\bm{v}:
    \bm{v}\mid_{\tau} \in P^{k}(\tau)^{2}\;\forall \tau \in \mathcal{T}_{h};\;
    [\bm{v}\cdot\bm{n}]\mid_e=0\;\forall e\in \mathcal{F}_{p}\}.
\end{equation*}
Note that if $\bm{v}\in \bm{V}_h$, then $\bm{v}\mid_{S(\nu)}\in H(\textnormal{div};S(\nu))$ for each $S(\nu)\in \mathcal{T}_{u}$.
%
%

Finally, we define a finite dimensional subspace of $H^1_0(\Gamma)$ by
\begin{equation*}
    W_h=\{q_\Gamma: q_\Gamma\in H^1_0(\Gamma)\;|\; q_\Gamma\mid_e \in P^k(e), \forall e\in \mathcal{F}_h^\Gamma\}.
\end{equation*}

Then following \cite{LinaDohyun20}, we can achieve the discrete formulation for the model problem \eqref{eq:bulk1}-\eqref{eq:interface}: Find $(\bm{u}_h, p_h, p_{\Gamma,h})\in \bm{V}_h\times S_h\times W_h$ such that
\begin{equation}
    \begin{split}
        (K^{-1} \bm{u}_h, \bm{v})_{\Omega_B}+b_h^*(p_h, \bm{v})&=0\quad \forall \bm{v}\in \bm{V}_h,\\
        -b_h(\bm{u}_h, q)+\sum_{e\in \mathcal{F}_h^\Gamma}\langle \frac{1}{\alpha_\Gamma}(\{p_h\}-p_{\Gamma,h}),\{q\}\rangle_e+\sum_{e\in \mathcal{F}_h^\Gamma}\langle \frac{1}{\eta_\Gamma}[p_h],[q]\rangle_e&=(f,q)_{\Omega_B}\quad \forall q\in S_h,\\
        \langle K_\Gamma\nabla_t p_{\Gamma,h}, \nabla_t q_{\Gamma}\rangle_\Gamma-\sum_{e\in \mathcal{F}_h^\Gamma}\langle\frac{1}{\alpha_\Gamma}(\{p_h\}-p_{\Gamma,h}),q_{\Gamma}\rangle_e
        &=\langle\ell_\Gamma f_\Gamma, q_{\Gamma}\rangle_\Gamma\quad \forall q_\Gamma\in W_h,
    \end{split}
    \label{eq:discrete}
\end{equation}
where the bilinear forms are defined by
\begin{align*}
    b_h(\bm{u}_h, q)   & =- \sum_{e\in \mathcal{F}_p}\langle \bm{u}_h\cdot\bm{n},[q]\rangle_e+\sum_{\tau\in \mathcal{T}_h}(\bm{u}_h,\nabla q)_\tau,                                                                                 \\
    b_h^*(p_h, \bm{v}) & =\sum_{e\in \mathcal{F}_u^0}\langle p_h,[\bm{v}\cdot\bm{n}]\rangle_e-\sum_{\tau\in \mathcal{T}_h}(p_h,\nabla \cdot\bm{v})_\tau+\sum_{e\in \mathcal{F}_h^\Gamma}\langle[p_h],\{\bm{v}\cdot\bm{n}\}\rangle_e
    +\sum_{e\in \mathcal{F}_h^\Gamma}\langle\{p_h\},[\bm{v}\cdot\bm{n}]\rangle_e.
\end{align*}
To facilitate later analysis, we define the following norms for $(q_,q_\Gamma)\in V+(S_h\times W_h)$ and $\bm{v}\in Q+\bm{V}_h$
\begin{align*}
\|(q,q_\Gamma)\|_V^2&=\sum_{e\in\mathcal{F}_h^\Gamma}\|\alpha_\Gamma^{-1/2}(\{q\}-q_{\Gamma})\|_{0,e}^2
+\sum_{e\in\mathcal{F}_h^\Gamma}\|\eta_\Gamma^{-1/2}[q]\|_{0,e}^2+\|K^{1/2}\nabla q\|_{0,\Omega_B}^2+\| K_\Gamma^{1/2}\nabla_t q_{\Gamma}\|_{0,\Gamma}^2,\\
\|\bm{v}\|_{Q}&=\|K^{-1/2}\bm{v}\|_{0,\Omega_B},\quad \|(\bm{v},q, q_\Gamma)\|_{sdg}^2=\|\bm{v}\|_Q^2+\|(q,q_\Gamma)\|_V^2
+\|[\bm{v}\cdot\bm{n}_\Gamma]\|_{0,\Gamma}^2+\|\{\bm{v}\cdot\bm{n}_\Gamma\}\|_{0,\Gamma}^2.
\end{align*}

We state the following stability estimate, which is crucial for the subsequent analysis.
\begin{lemma}\label{lemma:dual-continuous}

For any $\ell_1\in Q^*$ and $\ell_2\in V^*$, where $Q^*$ and $V^*$ represent the dual spaces of $Q$ and $V$, respectively. Assume that $(\bm{u},p,p_\Gamma)\in Q\times U\times V_\Gamma$ satisfies
\begin{align}
        (K^{-1} \bm{u}, \bm{v})_{\Omega_B}+(\bm{v},\nabla p)_{\Omega_B}&=\ell_1(\bm{v}),\label{eq:weak1v}\\
       \langle K_\Gamma\nabla_t p_{\Gamma}, \nabla_t q_{\Gamma}\rangle_\Gamma+\sum_{e\in \mathcal{F}_h^\Gamma}\langle \frac{1}{\alpha_\Gamma}(\{p\}-p_{\Gamma}),\{q\}-q_\Gamma\rangle_e+\sum_{e\in \mathcal{F}_h^\Gamma}\langle \frac{1}{\eta_\Gamma}[p],[q]\rangle_e-(\bm{u},\nabla q)_{\Omega_B}&=\ell_2(q,q_\Gamma)\label{eq:weak2v},\\
\forall (\bm{v}, q, q_\Gamma)\in Q\times U\times V_\Gamma.\nonumber
\end{align}
Then, there exists a unique solution to \eqref{eq:weak1v}-\eqref{eq:weak2v} satisfying the following stability bound
\begin{align*}
\|(p,p_\Gamma)\|_{V}&\leq C (\|\ell_1\|_{Q^*}+\|\ell_2\|_{V^*}),\\
\|[\bm{u}\cdot\bm{n}_\Gamma]\|_{0,\Gamma}+\|\{\bm{u}\cdot\bm{n}_\Gamma\}\|_{0,\Gamma}
+\|K^{-1/2}\bm{u}\|_{Q}&\leq C(\|\ell_1\|_{Q^*}+\|\ell_2\|_{V^*}).
\end{align*}

\end{lemma}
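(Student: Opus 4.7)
The plan is to reduce the mixed system \eqref{eq:weak1v}--\eqref{eq:weak2v} to a primal variational problem in $(p,p_\Gamma)$ by algebraically eliminating the flux $\bm{u}$, invoke Lax--Milgram for the $V$-bound on $(p,p_\Gamma)$, and finally extract the bounds on $\bm{u}$ from \eqref{eq:weak1v} combined with a perturbed interface-condition argument on $\Gamma$. Since $\nabla q\in Q$ for every $q\in U$, I would first test \eqref{eq:weak1v} with $\bm{v}=K\nabla q$ to obtain $(\bm{u},\nabla q)_{\Omega_B}=\ell_1(K\nabla q)-(K\nabla p,\nabla q)_{\Omega_B}$ and substitute the resulting identity for $-(\bm{u},\nabla q)_{\Omega_B}$ into \eqref{eq:weak2v}. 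This yields the primal formulation: find $(p,p_\Gamma)\in V$ such that
\begin{equation*}
    \mathcal{B}((p,p_\Gamma),(q,q_\Gamma))=\ell_2(q,q_\Gamma)+\ell_1(K\nabla q)\qquad\forall(q,q_\Gamma)\in V,
\end{equation*}
where
\begin{align*}
    \mathcal{B}((p,p_\Gamma),(q,q_\Gamma)):=&\;(K\nabla p,\nabla q)_{\Omega_B}+\langle K_\Gamma\nabla_t p_\Gamma,\nabla_t q_\Gamma\rangle_\Gamma\\
    &+\sum_{e\in\mathcal{F}_h^\Gamma}\langle\alpha_\Gamma^{-1}(\{p\}-p_\Gamma),\{q\}-q_\Gamma\rangle_e+\sum_{e\in\mathcal{F}_h^\Gamma}\langle\eta_\Gamma^{-1}[p],[q]\rangle_e.
\end{align*}

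By construction $\mathcal{B}((q,q_\Gamma),(q,q_\Gamma))=\|(q,q_\Gamma)\|_V^2$, so $\mathcal{B}$ is coercive with constant one and manifestly continuous on $V\times V$. Since $\|K\nabla q\|_Q=\|K^{1/2}\nabla q\|_{0,\Omega_B}\le\|(q,q_\Gamma)\|_V$, the right-hand side satisfies $|\ell_1(K\nabla q)+\ell_2(q,q_\Gamma)|\le(\|\ell_1\|_{Q^*}+\|\ell_2\|_{V^*})\|(q,q_\Gamma)\|_V$, so Lax--Milgram delivers unique existence together with the first stated bound $\|(p,p_\Gamma)\|_V\le C(\|\ell_1\|_{Q^*}+\|\ell_2\|_{V^*})$. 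Testing \eqref{eq:weak1v} now with $\bm{v}=\bm{u}$ and Cauchy--Schwarz produces $\|K^{-1/2}\bm{u}\|_Q\le\|\ell_1\|_{Q^*}+\|K^{1/2}\nabla p\|_{0,\Omega_B}$, which handles the bulk part of the second estimate.

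For the $L^2(\Gamma)$-bounds on the normal trace, I would plug targeted lifts into \eqref{eq:weak2v}. Taking $q\in U$ with $\{q\}=0$, $[q]=\phi$ on $\Gamma$ and $q_\Gamma=0$, the $\alpha_\Gamma^{-1}$-coupling disappears; integrating $-(\bm{u},\nabla q)_{\Omega_B}$ by parts subdomain by subdomain and using the interior divergence identity obtained from $C_0^\infty(\Omega_{B,i})$-test functions yields a perturbed interface condition $\eta_\Gamma\{\bm{u}\cdot\bm{n}_\Gamma\}=[p]+r_1$ on $\Gamma$, while the mirror choice $[q]=0$, $\{q\}=\psi$, $q_\Gamma=0$ produces $\alpha_\Gamma[\bm{u}\cdot\bm{n}_\Gamma]=\{p\}-p_\Gamma+r_2$. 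The desired $L^2(\Gamma)$-bounds then follow from $\|[p]\|_{0,\Gamma}+\|\{p\}-p_\Gamma\|_{0,\Gamma}\le C\|(p,p_\Gamma)\|_V$ combined with the first estimate, provided $r_1,r_2$ are $L^2(\Gamma)$-functions controlled by $\|\ell_2\|_{V^*}$. Establishing this last point is the principal obstacle: one must verify that the lifts $q$ can be chosen so that their $V$-norm is controlled linearly by $\|\phi\|_{0,\Gamma}$, respectively $\|\psi\|_{0,\Gamma}$, which is exactly what the $L^2(\Gamma)$-weighted interface contributions built into $\|\cdot\|_V$ are designed to enable, and without which one would only recover negative-order boundary estimates.
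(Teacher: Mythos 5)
Your reduction to the primal problem and your treatment of the first two bounds coincide with the paper's own proof: testing \eqref{eq:weak1v} with $\bm{v}=K\nabla q$, substituting into \eqref{eq:weak2v}, observing that the resulting bilinear form $\mathcal{B}$ satisfies $\mathcal{B}((q,q_\Gamma),(q,q_\Gamma))=\|(q,q_\Gamma)\|_V^2$, invoking Lax--Milgram together with $\sup_{q}\ell_1(K\nabla q)/\|K^{1/2}\nabla q\|_{0,\Omega_B}\le C\|\ell_1\|_{Q^*}$, and then taking $\bm{v}=\bm{u}$ in \eqref{eq:weak1v} is exactly the argument in the paper.

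The divergence, and the gap, lies in the $L^2(\Gamma)$ bounds for $[\bm{u}\cdot\bm{n}_\Gamma]$ and $\{\bm{u}\cdot\bm{n}_\Gamma\}$. The paper disposes of these in one line by asserting that the interface conditions $\alpha_\Gamma[\bm{u}\cdot\bm{n}_\Gamma]=\{p\}-p_\Gamma$ and $\eta_\Gamma\{\bm{u}\cdot\bm{n}_\Gamma\}=[p]$ hold exactly on $\Gamma$, so that both quantities are read off from the already-established bound on $\|(p,p_\Gamma)\|_V$, whose interface terms are precisely $\|\alpha_\Gamma^{-1/2}(\{p\}-p_\Gamma)\|_{0,e}$ and $\|\eta_\Gamma^{-1/2}[p]\|_{0,e}$. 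You instead try to derive perturbed interface conditions with residuals $r_1,r_2$ and to control $\|r_i\|_{0,\Gamma}$ by $\|\ell_2\|_{V^*}$. The sufficient condition you state for this --- a lift $q\in U$ with $[q]=\phi$ on $\Gamma$ and $\|(q,0)\|_V\le C\|\phi\|_{0,\Gamma}$ --- is not available: $\|(q,0)\|_V$ contains the full broken seminorm $\|K^{1/2}\nabla q\|_{0,\Omega_B}$, which by the trace theorem dominates (up to constants) the $H^{1/2}$-norm of the trace of $q$ on $\Gamma$ and therefore cannot be bounded by $\|\phi\|_{0,\Gamma}$ alone. Duality against such lifts consequently yields only negative-order control of $r_1,r_2$, which is exactly the failure mode you flag in your last sentence; as written, your argument does not close. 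The lemma is meant to be read (and is used) with the interface relations taken as exact identities satisfied by $(\bm{u},p,p_\Gamma)$, i.e., the $\Gamma$-concentrated part of the right-hand side is assumed to vanish; with that hypothesis the last two bounds are immediate consequences of the first, which is the paper's route. If you wish to keep the general $\ell_2\in V^*$, you must add this hypothesis explicitly rather than attempt to recover it from membership in $V^*$.
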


\begin{proof}

Taking $\bm{v}=K\nabla q$ in \eqref{eq:weak1v}, we can get
\begin{align*}
(\bm{u}, \nabla q)_{\Omega_B}+(K\nabla p,\nabla q)_{\Omega_B}=\ell_1(K\nabla q),
\end{align*}
which can be combined with \eqref{eq:weak2v} yielding
\begin{align*}
&\sum_{e\in\mathcal{F}_h^\Gamma}\langle\frac{1}{\alpha_\Gamma}(\{p\}-p_{\Gamma}),\{q\}-q_\Gamma\rangle_e
+\sum_{e\in\mathcal{F}_h^\Gamma}\langle\frac{1}{\eta_\Gamma}[p],[q]\rangle_e\\
&+(K\nabla p,\nabla q)_{\Omega_B}+\langle K_\Gamma\nabla_t p_{\Gamma}, \nabla_t q_{\Gamma}\rangle_\Gamma=\ell_1(K\nabla q)+\ell_2(q,q_\Gamma).
\end{align*}
It is easy to check that the above formulation is well-posed, therefore, we can conclude that there exists a unique solution of $(p,p_\Gamma)\in V$, which satisfies
\begin{align*}
\|(p,p_\Gamma)\|_{V}\leq C (\sup_{q\in U}\frac{\ell_1(K\nabla q)}{\|K^{1/2}\nabla q\|_0}+\|\ell_2\|_{V^*}).
\end{align*}
Note that we have
\begin{align*}
\sup_{q\in U}\frac{\ell_1(K\nabla q)}{\|K^{1/2}\nabla q\|_{0,{\Omega_B}}}=\sup_{\bm{v}\in Q}\frac{\ell_1(\bm{v})}{\|K^{-1/2}\bm{v}\|_{0,{\Omega_B}}}\leq C\|\ell_1\|_{Q^*},
\end{align*}
thereby we can obtain
\begin{align*}
\|(p,p_\Gamma)\|_{V}\leq C(\|\ell_1\|_{Q^*}+\|\ell_2\|_{V^*}).
\end{align*}
On the other hand, we have by taking $\bm{v}=\bm{u}$ in \eqref{eq:weak1v}
\begin{align*}
(K^{-1}\bm{u},\bm{u})_{\Omega_B}=-(\bm{u},\nabla p)_{\Omega_B}+\ell_1(\bm{u}).
\end{align*}
Thus
\begin{align*}
\|K^{-1/2}\bm{u}\|_{0,\Omega_B}\leq \|K^{1/2}\nabla p\|_{0,\Omega_B}+\|\ell_1\|_{Q^*}\leq C(\|\ell_1\|_{Q^*}+\|\ell_2\|_{V^*}).
\end{align*}
Finally, we have $\alpha_\Gamma[\bm{u}\cdot\bm{n}_\Gamma]   =\{p\}-p_\Gamma$ and $\eta_\Gamma\{\bm{u}\cdot\bm{n}_\Gamma\}=[p]$ on $\Gamma$, hence
\begin{align*}
\|[\bm{u}\cdot\bm{n}_\Gamma]\|_{0,\Gamma}&\leq C \Big(\sum_{e\in \mathcal{F}_h^\Gamma}\|\alpha_\Gamma^{-1}(\{p\}-p_\Gamma)\|_{0,e}^2\Big)^{1/2}\leq C(\|\ell_1\|_{Q^*}+\|\ell_2\|_{V^*}),\\
\|\{\bm{u}\cdot\bm{n}_\Gamma\}\|_{0,\Gamma}\leq &C\Big(\sum_{e\in \mathcal{F}_h^\Gamma}\|\eta_\Gamma^{-1}[p]\|_{0,e}^2\Big)^{1/2}\leq C(\|\ell_1\|_{Q^*}+\|\ell_2\|_{V^*}).
\end{align*}
Therefore, the proof is completed.

\end{proof}

\section{Residual based a posteriori error estimator}\label{sec:estimator}

In this section we derive the reliability and efficiency of a residual type a posteriori error estimator, where the crux for the reliability is to use the stability estimate derived in Lemma~\ref{lemma:dual-continuous}. To this end, we need to define conforming counterpart of $p_h$ to incorporate into Lemma~\ref{lemma:dual-continuous}. The efficiency can be proved by employing bubble functions.

\subsection{Reliability}

To facilitate later analysis, we construct $p_h^{conf}\in M_h$ which is defined to be the conforming counterpart of $p_h$ within each bulk domain, and we require that $p_h^{conf}\mid_e=p_h\mid_e$ for any $e\in \partial \Omega_{B,i}\cap \Gamma$, where $M_h=\{\varphi\in C^0(\Omega_{B,1})\cap C^0(\Omega_{B,2}): \varphi \mid_{\partial \Omega_B}=0,\varphi\mid_\tau\in P^{k}(\tau),\forall \tau\in \mathcal{T}_h\}$. Indeed, $p_h^{conf}$ takes the same values as $p_h$ over the fracture region and is discontinuous therein. Therefore, we can obtain the following estimates proceeding analogously to Theorem~2.2 of \cite{Karakashian03} and the proof is omitted for simplicity.
\begin{lemma}\label{lemma:nonconforming}

The following estimate holds
\begin{align*}
\|\nabla (p_h-p_h^{conf})\|_{0,\Omega_B}^2\leq C \sum_{e\in \mathcal{F}_p}h_e^{-1}\|[p_h]\|_{0,e}^2.
\end{align*}

\end{lemma}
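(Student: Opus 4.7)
The plan is to follow the standard Oswald/Karakashian--Pascal averaging argument, adapted to the staggered simplicial sub-mesh $\mathcal{T}_h$ together with the continuity structure imposed by $S_h$. First I would fix a Lagrange nodal basis of order $k$ on $\mathcal{T}_h$ and define $p_h^{conf} \in M_h$ by prescribing its nodal values: at a node $z$ lying on $\partial\Omega_B$ set $p_h^{conf}(z)=0$; at a node $z$ meeting only edges of $\mathcal{F}_u^0$ (where $p_h$ is already single-valued) set $p_h^{conf}(z) = p_h(z)$; at a node $z$ interior to a bulk subdomain $\Omega_{B,i}$ and shared by several triangles whose interfaces include edges of $\mathcal{F}_p$, set $p_h^{conf}(z)$ equal to the arithmetic mean of $(p_h|_\tau)(z)$ over all $\tau \in \mathcal{T}_h$ with $\tau \subset \Omega_{B,i}$ and $z \in \overline{\tau}$. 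Nodes on edges of $\mathcal{F}_h^\Gamma$ are treated as boundary nodes of each subdomain, so $p_h^{conf}$ inherits $p_h$ on the fracture and is allowed to be discontinuous across $\Gamma$.

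Next I would reduce the global estimate to a local one. Invoking the norm equivalence on $P^k$ on a reference triangle and scaling by $h_\tau$, one obtains, for each $\tau \in \mathcal{T}_h$,
\begin{equation*}
\|\nabla(p_h-p_h^{conf})\|_{0,\tau}^2 \le C \sum_{z\in\mathcal{N}(\tau)} \bigl|(p_h|_\tau)(z)-p_h^{conf}(z)\bigr|^2,
\end{equation*}
where $\mathcal{N}(\tau)$ is the set of Lagrange nodes on $\tau$. By the definition of $p_h^{conf}$, each nodal difference is an average of differences $(p_h|_\tau)(z)-(p_h|_{\tau'})(z)$ between triangles in a common patch around $z$; each such difference is a telescoping sum of jumps $[p_h](z)$ across $\mathcal{F}_p$-edges traversed within the patch. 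Crucially, no jump across $\mathcal{F}_u^0$ contributes, because $p_h\in S_h$ is already single-valued on those edges, and no jump across $\mathcal{F}_h^\Gamma$ enters because we never average between triangles on opposite sides of $\Gamma$. Assumptions (A) and (B) ensure that each patch has a uniformly bounded cardinality, so only $O(1)$ jumps appear per nodal difference.

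Finally, I would invoke the discrete trace/inverse bound $|[p_h](z)|^2 \le C h_e^{-1}\|[p_h]\|_{0,e}^2$ for any edge $e\in\mathcal{F}_p$ containing $z$, valid because $[p_h]|_e \in P^k(e)$. Summing the local estimates over all $\tau \in \mathcal{T}_h$ and using that each $e\in\mathcal{F}_p$ belongs to only a bounded number of patches (finite overlap, again from shape regularity) gives the claimed bound. The main obstacle is the combinatorial bookkeeping in the second step: one must verify that every chain used to telescope a nodal difference stays inside a single bulk subdomain and crosses only $\mathcal{F}_p$ edges, so that the right-hand side depends exclusively on jumps over $\mathcal{F}_p$. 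This is precisely what is carried out in Theorem~2.2 of \cite{Karakashian03}, and is the reason the author defers the detailed argument to that reference.
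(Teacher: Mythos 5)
Your proposal is correct and follows essentially the same route as the paper, which simply defers to Theorem~2.2 of Karakashian--Pascal \cite{Karakashian03}: you spell out the nodal-averaging (Oswald) construction of $p_h^{conf}$ restricted to each bulk subdomain, the local norm equivalence on $P^k(\tau)$, the telescoping of nodal differences into jumps over $\mathcal{F}_p$ edges only (since $S_h$ is already continuous across $\mathcal{F}_u^0$ and no averaging is done across $\Gamma$), and the $1$D inverse estimate $|[p_h](z)|^2\leq Ch_e^{-1}\|[p_h]\|_{0,e}^2$, together with finite overlap from Assumptions (A) and (B). This is precisely the argument the authors intend, so no further comparison is needed.
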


\begin{lemma}\label{lemma:conf}
Let $(\bm{u},p,p_\Gamma)$ be the weak solution of \eqref{eq:weak1}-\eqref{eq:weak3} and let $(\bm{u}_h,p_h,p_{\Gamma,h})$ be the discrete solution of \eqref{eq:discrete}, then for $p_h^{conf}\in M_h$, we have
\begin{align*}
&\Big(\|[\bm{u}\cdot\bm{n}_\Gamma-\bm{u}_h\cdot\bm{n}_\Gamma]\|_{0,\Gamma}^2
+\|\{\bm{u}\cdot\bm{n}_\Gamma-\bm{u}_h\cdot\bm{n}_\Gamma\}\|_{0,\Gamma}^2+\|\bm{u}-\bm{u}_h\|_{Q}^2\\
&\;+\|(p-p_h^{conf},p_\Gamma-p_{\Gamma,h})\|_{V}^2\Big)^{1/2}\leq C(\|\ell_1\|_{Q^*}+\|\ell_2\|_{V^*}).
\end{align*}
where
\begin{align}
\ell_1(\bm{v}):&=-a(\bm{u}_h,\bm{v})-b(p_h^{conf},\bm{v})\quad \forall \bm{v}\in Q,\label{eq:l1}\\
\ell_2(q,q_\Gamma):&=(f,q)_{\Omega_B}+\langle\ell_\Gamma f_\Gamma,q_{\Gamma}\rangle_\Gamma
-I((p_h^{conf},p_{\Gamma,h}),(q,q_\Gamma))+b(q,\bm{u}_h)-c(p_{\Gamma,h},q_\Gamma)\; \forall (q_\Gamma,q_{\Gamma})\in V.\label{eq:l2}
\end{align}

\end{lemma}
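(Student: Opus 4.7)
The plan is to apply Lemma~\ref{lemma:dual-continuous} directly to the error triple $(\bm{u}-\bm{u}_h,\,p-p_h^{conf},\,p_\Gamma-p_{\Gamma,h})$, recognizing $\ell_1$ and $\ell_2$ as precisely the residuals produced by inserting the discrete approximation into the continuous weak form. First I would check that the error triple lies in the correct space: since $p_h^{conf}\in M_h\subset U$, we have $p-p_h^{conf}\in U$; since $p_{\Gamma,h}\in W_h\subset V_\Gamma$, we have $p_\Gamma-p_{\Gamma,h}\in V_\Gamma$; and $\bm{u}-\bm{u}_h\in Q$ by construction. Hence the error lives in $Q\times U\times V_\Gamma$, exactly the space in which Lemma~\ref{lemma:dual-continuous} is posed.

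Next I would derive the residual equations. For the first equation, use \eqref{eq:weak1} to write
\begin{align*}
(K^{-1}(\bm{u}-\bm{u}_h),\bm{v})_{\Omega_B}+(\bm{v},\nabla(p-p_h^{conf}))_{\Omega_B}
&=0-a(\bm{u}_h,\bm{v})-b(\bm{v},p_h^{conf})=\ell_1(\bm{v}),
\end{align*}
which is valid for every $\bm{v}\in Q$. For the second equation, observe that adding \eqref{eq:weak2} and \eqref{eq:weak3} yields the combined identity
\begin{align*}
c(p_\Gamma,q_\Gamma)+I((p,p_\Gamma),(q,q_\Gamma))-(\bm{u},\nabla q)_{\Omega_B}=(f,q)_{\Omega_B}+\langle\ell_\Gamma f_\Gamma,q_\Gamma\rangle_\Gamma,
\end{align*}
which matches the form used in Lemma~\ref{lemma:dual-continuous}. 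Subtracting the analogous expression evaluated on $(\bm{u}_h,p_h^{conf},p_{\Gamma,h})$ from both sides gives
\begin{align*}
c(p_\Gamma-p_{\Gamma,h},q_\Gamma)+I((p-p_h^{conf},p_\Gamma-p_{\Gamma,h}),(q,q_\Gamma))-(\bm{u}-\bm{u}_h,\nabla q)_{\Omega_B}=\ell_2(q,q_\Gamma),
\end{align*}
for every $(q,q_\Gamma)\in U\times V_\Gamma$, which is exactly \eqref{eq:weak2v} with the stated $\ell_2$.

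Finally I would invoke Lemma~\ref{lemma:dual-continuous} with the error triple as the unknown and $(\ell_1,\ell_2)$ as the data. The lemma then delivers
\begin{align*}
\|(p-p_h^{conf},p_\Gamma-p_{\Gamma,h})\|_V+\|K^{-1/2}(\bm{u}-\bm{u}_h)\|_{0,\Omega_B}
+\|[(\bm{u}-\bm{u}_h)\cdot\bm{n}_\Gamma]\|_{0,\Gamma}+\|\{(\bm{u}-\bm{u}_h)\cdot\bm{n}_\Gamma\}\|_{0,\Gamma}
\le C(\|\ell_1\|_{Q^*}+\|\ell_2\|_{V^*}),
\end{align*}
which, after squaring and combining, gives the claimed bound.

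There is no real obstacle here beyond bookkeeping: the main substantive content was absorbed into Lemma~\ref{lemma:dual-continuous}, and what remains is algebraic. The only point that needs care is ensuring that the discrete solution, which is only weakly continuous in the staggered sense, can legitimately be tested against functions in the continuous space $U\times V_\Gamma$ when forming the residuals; this is guaranteed precisely because we subtract the conforming reconstruction $p_h^{conf}$ rather than $p_h$ itself, so all terms involving $p_h^{conf}$ and $p_{\Gamma,h}$ in $\ell_1,\ell_2$ are well defined as linear functionals on $Q$ and $V$ respectively.
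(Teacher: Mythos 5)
Your proposal is correct and follows essentially the same route as the paper: both subtract the conforming reconstruction from the continuous weak solution, identify $\ell_1,\ell_2$ as the resulting residual functionals, and invoke the stability estimate of Lemma~\ref{lemma:dual-continuous} applied to the error triple $(\bm{u}-\bm{u}_h,\,p-p_h^{conf},\,p_\Gamma-p_{\Gamma,h})$. The only cosmetic difference is that you write out the two residual equations separately in the form \eqref{eq:weak1v}--\eqref{eq:weak2v}, whereas the paper packages the same computation through the combined bilinear form $A$.
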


\begin{proof}

%
We can infer from the definition of $A$ and \eqref{eq:A}
\begin{align*}
&A((\bm{u}-\bm{u}_h,p-p_h^{conf},p_\Gamma-p_{\Gamma,h}),(\bm{v},q,q_\Gamma))\\
&=A((\bm{u},p,p_\Gamma),(\bm{v},q,q_\Gamma))-A((\bm{u}_h,p_h^{conf},p_{\Gamma,h}),(\bm{v},q,q_\Gamma))\\
&=(f,q)_{\Omega_B}+\langle\ell_\Gamma f_\Gamma,q_{\Gamma}\rangle_\Gamma-A((\bm{u}_h,p_h^{conf},p_{\Gamma,h}),(\bm{v},q,q_\Gamma))\\
&=(f,q)_{\Omega_B}+\langle\ell_\Gamma f_\Gamma,q_{\Gamma}\rangle_\Gamma-\Big(a(\bm{u}_h,\bm{v})+b(\bm{v},p_h^{conf})
+I((p_h^{conf},p_{\Gamma,h}),(q,q_\Gamma))-b(\bm{u}_h,q)+c(p_{\Gamma,h},q_\Gamma)\Big),
\end{align*}
where $(\bm{v},q,q_\Gamma)\in Q\times U\times V_\Gamma$.

Notice that $\bm{u}-\bm{u}_h\in Q$ and $(p-p_h^{conf},p_\Gamma-p_{\Gamma,h}) \in V$. Therefore, we can infer from Lemma~\ref{lemma:dual-continuous} that
\begin{align*}
&\Big(\|[\bm{u}\cdot\bm{n}_\Gamma-\bm{u}_h\cdot\bm{n}_\Gamma]\|_{0,\Gamma}^2
+\|\{\bm{u}\cdot\bm{n}_\Gamma-\bm{u}_h\cdot\bm{n}_\Gamma\}\|_{0,\Gamma}^2+\|\bm{u}-\bm{u}_h\|_{Q}^2\\
&\;+\|(p-p_h^{conf},p_\Gamma-p_{\Gamma,h})\|_{V}^2\Big)^{1/2}\leq C(\|\ell_1\|_{Q^*}+\|\ell_2\|_{V^*}).
\end{align*}

\end{proof}

In the sequel we use $I_h:U\rightarrow M_h$ to represent the Scott-Zhang interpolation operator defined in 2D. More precisely, we define $I_h=\sum_{i=1}^2I_h^i,i=1,2$ by
\begin{align*}
I_h^i \phi = \sum_{z\in \mathcal{N}_h(\Omega_{B,i})}(\Pi_z\phi)\varphi_z,
\end{align*}
where $\mathcal{N}_h(\Omega_{B,i})$ is the set of degrees of freedom for $M_h$ restricted to $\Omega_{B,i}$, $\varphi_z$ is the local basis function on $z$, $\Pi_z\phi=\int_{\sigma_z}\phi \theta_z$. Here $\sigma_z$ is an adjustable edge or triangle, see \cite{ScottZhang90} for more details. $\theta_z$ is the dual basis function of $\varphi_z$ on $\sigma_z$. Let $ \mathcal{N}_h(\Gamma)$ be the set of degrees of freedom of $M_h$ restricted on $\mathcal{F}_h^\Gamma$. If $z\in \mathcal{N}_h(\Gamma)$, we choose $\sigma_z\in \mathcal{F}_h^\Gamma$. Note that no continuity is imposed for $I_h$ across the fracture region.
Analogously, we define $\pi_h: H^1_0(\Gamma)\rightarrow W_h$ to be the Scott-Zhang interpolation operator associated to the degrees of freedom in $\mathcal{F}_h^\Gamma$. The following estimates can be found in \cite{ScottZhang90}.
\begin{lemma}\label{lemma:SZ}
For any $\tau\in \mathcal{T}_h$ and $e\in \mathcal{F}^0\cup \mathcal{F}_h^\Gamma$, the following estimates hold
\begin{align*}
\|q-I_h^iq\|_{0,\tau}&\leq C h_\tau |q|_{1,\omega_\tau}\quad \forall q\in H^1(\omega_\tau),\\
\|q-I_h^iq\|_{0,e}&\leq C h_e^{1/2}|q|_{1,\omega_e}\quad \forall q\in H^1(\omega_e),
\end{align*}
where $\omega_\tau=\cup \{\tau'\in \mathcal{T}_h\cap \Omega_{B,i}: \bar{\tau}'\cap\bar{\tau}\neq \emptyset\}$ and $\omega_e=\cup \{\tau'\in \mathcal{T}_h\cap \Omega_{B,i}: \bar{\tau}'\cap\bar{e}\neq \emptyset\}$.

In addition, let $\mathcal{N}_h^0(\Gamma)$ denote the set of interior vertices on $\mathcal{F}_h^\Gamma$, we have for any $e\in \mathcal{F}_h^\Gamma$ and $z\in \mathcal{N}_h^0(\Gamma)$
\begin{align*}
|(q_\Gamma-\pi_hq_\Gamma)_z|&\leq C h_e^{1/2}|q_\Gamma|_{1,\mathcal{N}_z},\\
\|q_\Gamma-\pi_hq_\Gamma\|_{0,e}&\leq C h_e|q_\Gamma|_{1,\mathcal{N}_e},
\end{align*}
where $\mathcal{N}_z=\cup \{e'\in \mathcal{F}_h^\Gamma: \bar{e}'\cap z\neq \emptyset\}$ and $\mathcal{N}_e=\cup \{e'\in \mathcal{F}_h^\Gamma: \bar{e}'\cap\bar{e}\neq \emptyset\}$.

\end{lemma}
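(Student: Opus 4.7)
The plan is to invoke the standard Scott-Zhang interpolation theory; these are classical estimates once one verifies the proper patch-locality conventions, so I would closely follow \cite{ScottZhang90}. The only non-standard aspect is that $I_h^i$ is defined subdomain-by-subdomain with no continuity enforced across the fracture, which is accommodated by the stipulation that $\sigma_z\subset\Gamma$ whenever $z\in\mathcal{N}_h(\Gamma)$; this ensures the relevant patches $\omega_\tau,\omega_e$ never cross $\Gamma$ and the argument reduces to the usual one on a polygonal Lipschitz subdomain.

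First I would verify two structural facts. (i) \emph{Local polynomial preservation on each patch.} Because $\{\theta_z\}$ is biorthogonal to $\{\varphi_z\}$ on $\sigma_z$ and the restrictions $\varphi_z|_{\sigma_z}$ form a basis of $P^k(\sigma_z)$, one has $\Pi_z q = q(z)$ whenever $q|_{\omega_\tau}\in P^k$, so $I_h^i q = q$ on $\tau$ in that case. (ii) \emph{$L^2$-stability of each nodal projector.} A standard scaling argument using shape-regularity of $\mathcal{T}_h$ (guaranteed by Assumptions (A)--(B)) yields
\begin{equation*}
|\Pi_z\phi|\leq C h_z^{-1/2}\|\phi\|_{0,\sigma_z}\text{ if }\sigma_z\text{ is an edge},\qquad |\Pi_z\phi|\leq C h_z^{-1}\|\phi\|_{0,\sigma_z}\text{ if }\sigma_z\text{ is a triangle}.
\end{equation*}

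With (i)--(ii) in hand, the bulk bounds follow the textbook route. Map $\tau$ to the reference triangle; composing the nodal stability with a trace inequality gives $\|I_h^i q\|_{0,\tau}\leq C\|q\|_{1,\omega_\tau}$, and subtracting a best polynomial approximation on the patch together with the Bramble-Hilbert lemma produces $\|q-I_h^i q\|_{0,\tau}\leq C h_\tau |q|_{1,\omega_\tau}$. The edge bound then comes from the scaled multiplicative trace inequality
\begin{equation*}
\|q-I_h^i q\|_{0,e}^2\leq C\bigl(h_\tau^{-1}\|q-I_h^i q\|_{0,\tau}^2+h_\tau|q-I_h^i q|_{1,\tau}^2\bigr),
\end{equation*}
combined with the $H^1$ analogue $|q-I_h^i q|_{1,\tau}\leq C|q|_{1,\omega_\tau}$ (same argument, one power of $h$ fewer).

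The fracture estimates are the one-dimensional versions of the above. The pointwise bound on $|(q_\Gamma-\pi_h q_\Gamma)_z|$ uses the scaled Sobolev embedding $H^1(\mathcal{N}_z)\hookrightarrow C^0(\mathcal{N}_z)$, which on a 1D patch of length $\sim h_e$ contributes the $h_e^{1/2}$ factor, together with polynomial preservation of $\pi_h$ on $\mathcal{N}_z$; the $L^2$ bound on $e$ follows verbatim from the Bramble-Hilbert step in one dimension. The only---mild---obstacle I anticipate is controlling patch cardinalities and diameters uniformly in $h$ in the presence of the hanging nodes allowed by the staggered construction; this is precisely what Assumption (B), which forbids degenerate edge-to-element ratios, is designed to rule out, and once invoked the standard analysis of \cite{ScottZhang90} applies without change.
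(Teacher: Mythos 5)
Your outline reproduces the standard Scott--Zhang argument (nodal $L^2$-stability, local polynomial preservation, Bramble--Hilbert, scaled trace/Sobolev inequalities), which is exactly what the paper relies on: it gives no proof of this lemma and simply cites \cite{ScottZhang90}. Your additional observations---that the subdomain-wise definition with $\sigma_z\subset\Gamma$ for $z\in\mathcal{N}_h(\Gamma)$ keeps the patches from crossing the fracture, and that Assumptions (A)--(B) supply the shape regularity needed for the scaling arguments---are correct and consistent with the paper's setup.
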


\begin{lemma}\label{lemma:dual}

The linear functionals defined in \eqref{eq:l1}-\eqref{eq:l2} satisfy the following bound
\begin{align*}
\|\ell_1\|_{Q^*}+\|\ell_2\|_{V^*}\leq C \eta,
\end{align*}
where
\begin{align*}
\eta&=\Big(\Big(\sum_{\tau\in \mathcal{T}_h}\|K^{-1/2}\bm{u}_h+K^{1/2}\nabla p_h\|_{0,\tau}^2\Big)^{1/2}+\Big(\sum_{\tau\in \mathcal{T}_h}h_\tau^2\|f-\nabla \cdot \bm{u}_h\|_{0,\tau}^2\Big)^{1/2}+\Big(\sum_{e\in \mathcal{F}_p}h_e^{-1}\|[p_h]\|_{0,e}^2\Big)^{1/2}\\
&\;+\Big(\sum_{e\in \mathcal{F}_u^0}h_e\|[\bm{u}_h\cdot\bm{n}]\|_{0,e}^2\Big)^{1/2}+\Big(\sum_{e\in \mathcal{F}_h^\Gamma}h_e^2\|\ell_\Gamma f_\Gamma+\nabla \cdot (K_\Gamma \nabla_t p_{\Gamma,h})+[\bm{u}_h\cdot\bm{n}_\Gamma]\|_{0,e}^2\Big)^{1/2}\\
&\;+\Big(\sum_{z\in \mathcal{N}_h^0(\Gamma)}h_z[K_\Gamma^{1/2}\nabla_tp_{\Gamma,h}]_z^2\Big)^{1/2}+\Big(\sum_{e\in \mathcal{F}_h^\Gamma}h_e
\|\frac{1}{\alpha_\Gamma}(\{p_h\}-p_{\Gamma,h})-[\bm{u}_h\cdot\bm{n}_\Gamma]\|_{0,e}^2\Big)^{1/2}\\
&\;+\Big( \sum_{e\in\mathcal{F}_h^\Gamma}h_e\|\{\bm{u}_h\cdot\bm{n}_\Gamma\}-\frac{1}{\eta_\Gamma}[p_h]\|_{0,e}^2\Big)^{1/2}\Big).
\end{align*}
Here $h_z=\max\{h_e: z\in \partial e, e\in \mathcal{F}_h^\Gamma\}$ and $[\nabla_tp_{\Gamma,h}]_z=\nabla_tp_{\Gamma,h}\mid_e(z)-\nabla_tp_{\Gamma,h}\mid_{e'}(z)$ for any $z\in \mathcal{N}_h^0(\Gamma)$ and $\partial e\cap \partial e'=z$.
\end{lemma}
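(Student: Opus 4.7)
The plan is to bound $\|\ell_1\|_{Q^*}$ and $\|\ell_2\|_{V^*}$ separately, with the bulk of the work reserved for $\ell_2$. For $\ell_1$, I would observe that $\ell_1(\bm{v}) = -(K^{-1}\bm{u}_h + \nabla p_h^{conf}, \bm{v})_{\Omega_B}$, so a weighted Cauchy--Schwarz inequality yields $\|\ell_1\|_{Q^*} \leq \|K^{-1/2}\bm{u}_h + K^{1/2}\nabla p_h^{conf}\|_{0,\Omega_B}$; then adding and subtracting $K^{1/2}\nabla p_h$ and invoking Lemma~\ref{lemma:nonconforming} produces the bulk flux residual plus the nonconformity indicator $\sum_{e\in\mathcal{F}_p} h_e^{-1}\|[p_h]\|_{0,e}^2$, both of which already appear in $\eta$.

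For $\ell_2$, given a test pair $(q, q_\Gamma)\in V$, I would introduce the Scott--Zhang interpolants $I_h q\in M_h\subset S_h$ and $\pi_h q_\Gamma\in W_h$ and subtract the second and third equations of \eqref{eq:discrete} tested against $(I_h q, \pi_h q_\Gamma)$ from the definition of $\ell_2$. The construction of $p_h^{conf}$ ensures $\{p_h^{conf}\}=\{p_h\}$ and $[p_h^{conf}]=[p_h]$ on $\Gamma$, so the interface terms inside $I((p_h^{conf}, p_{\Gamma,h}), \cdot)$ may be rewritten in terms of $p_h$ and directly cancelled against the corresponding pieces of the discrete equations. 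After this substitution, every contribution to $\ell_2(q,q_\Gamma)$ carries either $q - I_h q$ or $q_\Gamma - \pi_h q_\Gamma$ as a factor.

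Next I would integrate $(\bm{u}_h, \nabla(q - I_h q))_{\Omega_B}$ by parts element by element: the divergence theorem produces the element residual $(f - \nabla\cdot\bm{u}_h, q - I_h q)_\tau$ together with edge jump terms $\langle[\bm{u}_h\cdot\bm{n}], q - I_h q\rangle_e$ on $\mathcal{F}_u^0$; the $\mathcal{F}_p$ contribution vanishes thanks to $[\bm{u}_h\cdot\bm{n}]=0$ on such edges combined with the continuity of $I_h q$ within each bulk subdomain, and the $\mathcal{F}_h^\Gamma$ contribution is $\langle[\bm{u}_h\cdot\bm{n}_\Gamma],\{q - I_h q\}\rangle + \langle\{\bm{u}_h\cdot\bm{n}_\Gamma\},[q - I_h q]\rangle$ because $q - I_h q$ is genuinely discontinuous across $\Gamma$. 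An analogous edgewise integration by parts of $c(p_{\Gamma,h}, q_\Gamma - \pi_h q_\Gamma)$ delivers the segment residual $-(\nabla_t\cdot(K_\Gamma\nabla_t p_{\Gamma,h}), q_\Gamma - \pi_h q_\Gamma)_e$ plus vertex jumps $[K_\Gamma\nabla_t p_{\Gamma,h}]_z (q_\Gamma - \pi_h q_\Gamma)(z)$ at interior fracture nodes.

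The last step is an algebraic regrouping on $\Gamma$: I would split $\langle[\bm{u}_h\cdot\bm{n}_\Gamma], \{q - I_h q\}\rangle = \langle[\bm{u}_h\cdot\bm{n}_\Gamma], \{q - I_h q\} - (q_\Gamma - \pi_h q_\Gamma)\rangle + \langle[\bm{u}_h\cdot\bm{n}_\Gamma], q_\Gamma - \pi_h q_\Gamma\rangle$, bundle the second summand with $\ell_\Gamma f_\Gamma + \nabla_t\cdot(K_\Gamma\nabla_t p_{\Gamma,h})$ to form the fracture element residual appearing in $\eta$, pair the first summand with $\frac{1}{\alpha_\Gamma}(\{p_h\} - p_{\Gamma,h})$ to obtain the indicator $\frac{1}{\alpha_\Gamma}(\{p_h\} - p_{\Gamma,h}) - [\bm{u}_h\cdot\bm{n}_\Gamma]$ tested against $\{q - I_h q\} - (q_\Gamma - \pi_h q_\Gamma)$, and similarly combine $\{\bm{u}_h\cdot\bm{n}_\Gamma\}$ with $\frac{1}{\eta_\Gamma}[p_h]$ against $[q - I_h q]$. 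A Cauchy--Schwarz estimate edge by edge with the weights $h_\tau$, $h_e^{1/2}$, $h_e$, $h_z^{1/2}$, followed by the interpolation bounds of Lemma~\ref{lemma:SZ}, then controls each resulting piece by the corresponding summand of $\eta$ times $\|(q, q_\Gamma)\|_V$. The main obstacle is precisely this regrouping: the residual $\ell_\Gamma f_\Gamma + \nabla_t\cdot(K_\Gamma\nabla_t p_{\Gamma,h}) + [\bm{u}_h\cdot\bm{n}_\Gamma]$ collects contributions that originally live in three different bilinear forms (the fracture source/Laplacian, the jump from bulk integration by parts, and the interface mass term), and these must be assembled in exactly the right linear combinations so that both the residuals and their test-function partners are simultaneously controlled by $\eta$ and $\|(q, q_\Gamma)\|_V$ respectively.
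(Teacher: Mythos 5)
Your proposal is correct and follows essentially the same route as the paper: Cauchy--Schwarz plus Lemma~\ref{lemma:nonconforming} for $\ell_1$, and for $\ell_2$ the splitting $\ell_2(q,q_\Gamma)=\ell_2(I_hq,\pi_hq_\Gamma)+\ell_2(q-I_hq,q_\Gamma-\pi_hq_\Gamma)$ with the first part vanishing by the discrete equations (using $p_h^{conf}=p_h$ on $\mathcal{F}_h^\Gamma$), followed by elementwise integration by parts, the interface regrouping, and the Scott--Zhang estimates of Lemma~\ref{lemma:SZ}. The only cosmetic difference is that you merge the paper's two interface terms $I_5$ and $I_7$ into a single term tested against $\{q-I_hq\}-(q_\Gamma-\pi_hq_\Gamma)$, which is equivalent.
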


\begin{proof}

First, we have from the definition of $\ell_1(\bm{v})$ (cf. \eqref{eq:l1})
\begin{align*}
\ell_1(\bm{v}):&=-a(\bm{u}_h,\bm{v})-b(\bm{v},p_h^{conf})=-(K^{-1}\bm{u}_h,\bm{v})_{\Omega_B}
-(\bm{v},\nabla p_h^{conf})_{\Omega_B}\\
&=-(K^{-1}\bm{u}_h+\nabla p_h,\bm{v})_{\Omega_B}+(\bm{v},\nabla (p_h-p_h^{conf}))_{\Omega_B}\\
&\leq C\Big( \|K^{-1/2}\bm{u}_h+K^{1/2}\nabla p_h\|_{0,\Omega_B}+\|K^{1/2}\nabla (p_h-p_h^{conf})\|_{0,\Omega_B}\Big)\|K^{-1/2}\bm{v}\|_{0,\Omega_B}.
\end{align*}
On the other hand, we can decompose $\ell_2(q,q_\Gamma)$ as
\begin{align*}
\ell_2(q,q_\Gamma)=\ell_2(I_hq,\pi_hq_\Gamma)+\ell_2(q-I_hq,q_\Gamma-\pi_hq_\Gamma).
\end{align*}
We can infer from \eqref{eq:discrete} that
\begin{align*}
&(f,I_hq)_{\Omega_B}-\sum_{e\in \mathcal{F}_h^\Gamma}\langle \frac{1}{\alpha_\Gamma}(\{p_h\}-p_{\Gamma,h}),\{I_hq\}\rangle_e-\sum_{e\in \mathcal{F}_h^\Gamma}\langle \frac{1}{\eta_\Gamma}[p_h],[I_hq]\rangle_e+(\bm{u}_h,\nabla (I_hq))_{\Omega_B}=0
\end{align*}
and
\begin{align*}
\langle\ell_\Gamma f_\Gamma,\pi_hq_{\Gamma}\rangle_\Gamma+\sum_{e\in \mathcal{F}_h^\Gamma}\langle \frac{1}{\alpha_\Gamma}(\{p_h\}-p_{\Gamma,h}),\pi_h q_\Gamma\rangle_e-\langle K_\Gamma\nabla_t p_{\Gamma,h}, \nabla_t (\pi_h q_\Gamma)\rangle_\Gamma=0.
\end{align*}
Thus
\begin{align*}
\ell_2(I_hq,\pi_hq_\Gamma)=(f,I_hq)_{\Omega_B}+\langle \ell_\Gamma f_\Gamma,q_\Gamma\rangle_\Gamma-I((p_h^{conf},p_{\Gamma,h}),(q,q_\Gamma))+b(I_hq,\bm{u}_h)
-c(p_{\Gamma,h},q_\Gamma)=0,
\end{align*}
where we use the fact that $p_h\mid_e=p_h^{conf}\mid_e\;\forall e\in \mathcal{F}_h^\Gamma$, thus $I((p_h^{conf},p_{\Gamma,h}),(q,q_\Gamma))=I((p_h,p_{\Gamma,h}),(q,q_\Gamma))$.
It remains to estimate $\ell_2(q-I_hq,q_\Gamma-\pi_hq_\Gamma)$. We have from \eqref{eq:l2} and integration by parts
\begin{align*}
\ell_2(q-I_hq,q_\Gamma-\pi_hq_\Gamma)&=(f-\nabla \cdot \bm{u}_h,q-I_hq)_{\Omega_B}+\sum_{e\in \mathcal{F}_u^0}\langle [\bm{u}_h\cdot\bm{n}],q-I_hq\rangle_e-\sum_{z\in \mathcal{N}_h^0} [K_\Gamma \nabla p_{\Gamma,h}\cdot\bm{t}] (q_\Gamma-\pi_hq_{\Gamma})\\
&\;+\langle \ell_\Gamma f_\Gamma+\nabla_t \cdot (K_\Gamma \nabla_t p_{\Gamma,h})+[\bm{u}_h\cdot\bm{n}_\Gamma],q_{\Gamma}-\pi_h q_\Gamma\rangle_\Gamma\\
&\;+\langle\frac{1}{\alpha_\Gamma}(\{p_h\}-p_{\Gamma,h})-[\bm{u}_h\cdot\bm{n}_\Gamma],
q_\Gamma-\pi_hq_\Gamma\rangle_e+
\sum_{e\in\mathcal{F}_h^\Gamma}\langle\{\bm{u}_h\cdot\bm{n}_\Gamma\}-\frac{1}{\eta_\Gamma}[p_h],
[q-I_hq]\rangle_e\\
&\;+\sum_{e\in \mathcal{F}_h^\Gamma}\langle [\bm{u}_h\cdot\bm{n}_\Gamma]-\frac{1}{\alpha_\Gamma}(\{p_h\}-p_{\Gamma,h}),\{q-I_hq\}\rangle_e:=\sum_{i=1}^7I_i.
\end{align*}
We can estimate $I_i,i=1,\cdots,7$ by Lemma~\ref{lemma:SZ}
\begin{align*}
I_1&\leq \Big(\sum_{\tau\in \mathcal{T}_h}h_\tau^2\|f-\nabla \cdot \bm{u}_h\|_{0,\tau}^2\Big)^{1/2}\|\nabla q\|_{0,\Omega_B},\\
I_2&\leq C \Big(\sum_{e\in \mathcal{F}_u^0}h_e\|[\bm{u}_h\cdot\bm{n}]\|_{0,e}^2\Big)^{1/2}\|\nabla q\|_{0,\Omega_B},\\
I_3&\leq C \Big(\sum_{z\in \mathcal{N}_h^0(\Gamma)}h_z[K_\Gamma^{1/2}\nabla_tp_{\Gamma,h}]_z^2\Big)^{1/2}\|K_\Gamma^{1/2}\nabla q_\Gamma\|_{0,\Gamma},\\
I_4&\leq C\Big(\sum_{e\in \mathcal{F}_h^\Gamma}h_e^2\|\ell_\Gamma f_\Gamma+\nabla_t \cdot (K_\Gamma\nabla_t p_{\Gamma,h})+[\bm{u}_h\cdot\bm{n}_\Gamma]\|_{0,e}^2\Big)^{1/2}\|\nabla q_\Gamma\|_{0,\Gamma},\\
I_5&\leq C\Big(\sum_{e\in \mathcal{F}_h^\Gamma}h_e^2
\|\frac{1}{\alpha_\Gamma}(\{p_h\}-p_{\Gamma,h})-[\bm{u}_h\cdot\bm{n}_\Gamma]\|_{0,e}^2\Big)^{1/2}\|\nabla q_\Gamma\|_{0,\Gamma},\\
I_6&\leq C \Big( \sum_{e\in\mathcal{F}_h^\Gamma}h_e\|\{\bm{u}_h\cdot\bm{n}_\Gamma\}-\frac{1}{\eta_\Gamma}[p_h]\|_{0,e}^2\Big)^{1/2}
\|\nabla q\|_{0,\Omega_B}\\
I_7&\leq C \Big(\sum_{e\in \mathcal{F}_h^\Gamma}h_e\| [\bm{u}_h\cdot\bm{n}]-\frac{1}{\alpha_\Gamma}(\{p_h\}-p_{\Gamma,h})\|_{0,e}^2\Big)^{1/2}\|\nabla q\|_{0,\Omega_B}.
\end{align*}
Therefore, the proof is completed by combining the preceding arguments and Lemma~\ref{lemma:nonconforming}.
\end{proof}

Then we can state the main result of this subsection.
\begin{theorem}
There exists a positive constant $C$ independent of the meshsize such that
\begin{align*}
\|(\bm{u}-\bm{u}_h,p-p_h, p_\Gamma-p_{\Gamma,h})\|_{sdg}\leq C \eta,
\end{align*}

\end{theorem}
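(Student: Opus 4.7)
The proof is a straightforward assembly of the preceding lemmas, so my plan is essentially to describe the bookkeeping that turns the $p_h^{conf}$-based estimate of Lemma~\ref{lemma:conf} into a genuine bound on $p-p_h$, and then to invoke Lemma~\ref{lemma:dual} to identify the right-hand side with $C\eta$.

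The first step is to split the pressure error by the triangle inequality,
\begin{equation*}
\|(p-p_h,p_\Gamma-p_{\Gamma,h})\|_V \;\leq\; \|(p-p_h^{conf},p_\Gamma-p_{\Gamma,h})\|_V + \|(p_h^{conf}-p_h,0)\|_V ,
\end{equation*}
and to observe that for the second summand the fracture contributions vanish: by construction $p_h^{conf}$ and $p_h$ agree edgewise on every $e\in\mathcal{F}_h^\Gamma$ from both sides, so $[p_h-p_h^{conf}]=0$ and $\{p_h-p_h^{conf}\}=0$ on $\Gamma$, and the $V$-norm collapses to $\|K^{1/2}\nabla(p_h-p_h^{conf})\|_{0,\Omega_B}$. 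Lemma~\ref{lemma:nonconforming} then controls this bulk part by the jump indicator $\bigl(\sum_{e\in\mathcal{F}_p}h_e^{-1}\|[p_h]\|_{0,e}^2\bigr)^{1/2}$, which is one of the summands already comprising $\eta$.

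The second step handles all of the remaining pieces of the $sdg$-norm (the $Q$-norm of $\bm{u}-\bm{u}_h$ and the two interface trace norms of $\bm{u}-\bm{u}_h$), together with the first summand from the previous display, in a single shot via Lemma~\ref{lemma:conf}. That lemma packages exactly these four quantities and bounds their sum by $C(\|\ell_1\|_{Q^*}+\|\ell_2\|_{V^*})$, where $\ell_1$ and $\ell_2$ are the consistency residuals defined in \eqref{eq:l1}--\eqref{eq:l2}. Applying Lemma~\ref{lemma:dual} then converts the dual-norm expression into $C\eta$.

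Combining the two steps and adjusting the generic constant $C$ gives
\begin{equation*}
\|(\bm{u}-\bm{u}_h,p-p_h,p_\Gamma-p_{\Gamma,h})\|_{sdg} \;\leq\; C\eta,
\end{equation*}
as desired. There is no genuinely hard step here: the reliability theorem is the synthesis of Lemmas~\ref{lemma:nonconforming}, \ref{lemma:conf} and \ref{lemma:dual}; the only mild subtlety is verifying that the conforming reconstruction $p_h^{conf}$ introduces no error at the fracture, which is immediate from the design requirement $p_h^{conf}|_e = p_h|_e$ for edges $e\in\partial\Omega_{B,i}\cap\Gamma$. Once that observation is noted, the proof is a one-line triangle inequality plus two citations.
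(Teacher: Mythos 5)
Your proposal is correct and follows essentially the same route as the paper: a triangle inequality splitting $p-p_h$ through $p_h^{conf}$, with Lemma~\ref{lemma:nonconforming} controlling the nonconforming part and Lemmas~\ref{lemma:conf} and \ref{lemma:dual} supplying the rest. Your added observation that the fracture traces of $p_h-p_h^{conf}$ vanish (so the second summand reduces to the bulk gradient term) is a correct and slightly more careful rendering of the triangle-inequality step than the paper's own one-line version.
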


\begin{proof}

Triangle inequality implies
\begin{align*}
\|(p-p_h, p_\Gamma-p_{\Gamma,h})\|_{V}\leq \|(p-p_h^{conf},p_\Gamma-p_{\Gamma,h})\|_V
+\|(p_h^{conf}-p_h,p_\Gamma-p_{\Gamma,h})\|_V.
\end{align*}
Then an application of Lemma~\ref{lemma:nonconforming}, Lemma~\ref{lemma:conf} and Lemma~\ref{lemma:dual} completes the proof.

\end{proof}


\subsection{Efficiency}
In this subsection, we present the efficiency estimates.
To this end, we set the element bubble function in $\tau$ as $\psi_\tau$ and edge bubble function for each edge $e$ as $\psi_e$, and the properties of the bubble functions are given in the next lemma (cf. \cite{Verfurth96}).
\begin{lemma}\label{bubblel}
The following inequalities hold for all functions $v\in P^k(\tau)$.
\begin{align}
&\|v\|_{0,\tau}\leq C\|\psi_{\tau}^{1/2}v\|_{0,\tau}\leq C \|v\|_{0,\tau},\label{bubble1}\\
&\|\nabla (\psi_{\tau} v)\|_{0,\tau}\leq C h_{\tau}^{-1} \|v\|_{0,\tau}\label{bubble2}.
\end{align}
For an edge $e\in \mathcal{F}_u^0\cup \mathcal{F}_h^\Gamma$, we keep the same notation for the extension of the function $v\in P^k(e)$, originally only defined on the edge $e$, to a function defined on $D(e)$. The extension is done by constant values in the direction of the barycenter of e–opposite vertex. Then, we have
\begin{equation}
h_e^{1/2}\|v\|_{0,e}\leq C \|\psi_e v\|_{0,D(e)}\leq C h_e^{1/2} \|v\|_{0,e}\label{extension}
\end{equation}
and
\begin{align}
\|v\|_{0,e}^2\leq C \langle v, \psi_ev\rangle_e.\label{eq:tr1}
\end{align}

\end{lemma}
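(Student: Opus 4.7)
The plan is to reduce every bound to a norm equivalence on a fixed reference element/edge (where the spaces involved are finite dimensional) and then scale back. Since the subdivision $\mathcal{T}_h$ is shape regular by Assumptions (A)--(B), each $\tau\in\mathcal{T}_h$ and each $D(e)$ (a union of at most two shape-regular triangles sharing $e$) is affine-equivalent to a fixed reference configuration with constants depending only on $\rho_S,\rho_E$.

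First I would handle \eqref{bubble1}. Fix the reference triangle $\hat\tau$ with reference bubble $\hat\psi_{\hat\tau}\in C_0(\hat\tau)$ satisfying $0\leq \hat\psi_{\hat\tau}\leq 1$. Both $\hat v\mapsto \|\hat v\|_{0,\hat\tau}$ and $\hat v\mapsto \|\hat\psi_{\hat\tau}^{1/2}\hat v\|_{0,\hat\tau}$ are seminorms on the finite-dimensional space $P^k(\hat\tau)$. The second is a norm because $\hat\psi_{\hat\tau}>0$ on an interior set of positive measure and any $\hat v\in P^k$ vanishing there must vanish identically. Hence on $P^k(\hat\tau)$ the two norms are equivalent, and the affine change of variables $F_\tau:\hat\tau\to\tau$ transports this equivalence to $\tau$ with constants depending only on the shape regularity; the Jacobian factors cancel on both sides, giving \eqref{bubble1}. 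For \eqref{bubble2} I would apply the same reference-element norm equivalence to $\hat v\mapsto\|\hat\nabla(\hat\psi_{\hat\tau}\hat v)\|_{0,\hat\tau}$ versus $\|\hat v\|_{0,\hat\tau}$ on $P^k(\hat\tau)$, then scale back; the gradient in the chain rule produces the factor $h_\tau^{-1}$ (whereas the $L^2$ norms on both sides scale with the same power of $h_\tau$, up to the mismatch that yields exactly $h_\tau^{-1}$).

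Next I would treat the edge-bubble bounds. Map $e$ to the reference segment $\hat e$ and its patch $D(e)$ to a reference patch $\hat D(\hat e)$; the extension described in the statement (constant along the segment from the barycenter of $e$ to the opposite vertex of each sub-triangle) is affine invariant, so it commutes with the pullback. Let $\hat\psi_{\hat e}$ be the standard edge bubble on $\hat D(\hat e)$. On the finite-dimensional space $P^k(\hat e)$, both $\hat v\mapsto\|\hat v\|_{0,\hat e}$ and $\hat v\mapsto\|\hat\psi_{\hat e}\hat v\|_{0,\hat D(\hat e)}$ (viewing $\hat v$ as its constant extension) are norms, hence equivalent; scaling back produces the factor $h_e^{1/2}$ in \eqref{extension} because the transverse direction contributes a factor $h_e^{1/2}$ after the change of variables and $\hat\psi_{\hat e}$ is bounded. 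For \eqref{eq:tr1}, the same reasoning on $\hat e$ shows $\|\hat v\|_{0,\hat e}^2\leq C\langle \hat v,\hat\psi_{\hat e}\hat v\rangle_{\hat e}$ on $P^k(\hat e)$ (since the right-hand side is a norm squared there), and pulling back via the length-scaling $h_e$ gives the stated inequality with a dimensionless constant.

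The only mild obstacle is consistency of the reference configuration for $D(e)$: when $e\in\mathcal{F}_u^0$ the patch consists of two triangles, while for $e\in\mathcal{F}_h^\Gamma\subset (\mathcal{F}_u\setminus\mathcal{F}_u^0)\cup\mathcal{F}_h^\Gamma$ it is a single triangle. In either case the reference patch is fixed and shape regular, so the constants in \eqref{extension}--\eqref{eq:tr1} depend only on $k$ and on $\rho_S,\rho_E$. This completes the reduction and the proof.
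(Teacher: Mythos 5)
Your argument is correct and is exactly the standard one: the paper gives no proof of this lemma, deferring to \cite{Verfurth96}, and the scaling-plus-finite-dimensional-norm-equivalence argument you outline (reference element, positivity of the bubble on a set of positive measure, affine pullback producing the factors $h_\tau^{-1}$ and $h_e^{1/2}$) is precisely the proof found there. No gaps; your remark on the one- versus two-triangle patches $D(e)$ correctly matches the paper's definitions.
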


\begin{lemma}\label{lemma:eff1}
Let $(\bm{u}_h,p_h,p_{\Gamma,h})$ be the discrete solution of \eqref{eq:discrete}, let $f_h$ be the piecewise polynomial approximation of $f$ and let $f_{\Gamma,h}$ be the piecewise polynomial approximation of $f_\Gamma$. Then we have
\begin{align}
h_\tau \|f-\nabla \cdot\bm{u}_h\|_{0,\tau}&\leq C \|
K^{-\frac{1}{2}}(\bm{u}-\bm{u}_h)\|_{0,\tau}+h_\tau \|f-f_h\|_{0,\tau}\quad\forall \tau\in \mathcal{T}_h,\label{eq:eff-f}\\
\|K^{-1/2}\bm{u}_h+K^{1/2}\nabla p_h\|_{0,\tau}&\leq C\|K^{-1/2}(\bm{u}_h-\bm{u})\|_{0,\tau}+\|K^{1/2}\nabla (p- p_h)\|_{0,\tau}\quad \forall \tau\in \mathcal{T}_h,\label{eq:effup} \\
h_e^{1/2}\|[\bm{u}_h\cdot\bm{n}]\|_{0,e}&\leq C\Big(\Big(\sum_{\tau\in D_e}\|K^{-\frac{1}{2}}(\bm{u}-\bm{u}_h)\|_{0,\tau}^2
\Big)^{\frac{1}{2}}+\Big(\sum_{\tau\in D_e}h_\tau^2
\|f-f_h\|_{0,\tau}^2\Big)^{\frac{1}{2}}\Big)\quad\forall e\in \mathcal{F}_u^0\label{eq:effujump}.
\end{align}
In addition, it also holds for $e\in \mathcal{F}_h^\Gamma$
\begin{equation}
\begin{split}
h_e\|\ell_\Gamma f_{\Gamma}+\nabla_t \cdot (K_\Gamma\nabla_t p_{\Gamma,h})+[\bm{u}_h\cdot\bm{n}_\Gamma]\|_{0,e}&\leq  C\Big(\|K_\Gamma \nabla_t (p_\Gamma-p_{\Gamma,h})\|_{0,e}+h_e\|[\bm{u}\cdot\bm{n}_\Gamma-\bm{u}_h\cdot\bm{n}_\Gamma]\|_{0,e}\\
&\;+h_e\|\ell_\Gamma (f_{\Gamma,h}-f_\Gamma)\|_{0,e}\Big).
\end{split}
\label{eq:eff-fracture}
\end{equation}

\end{lemma}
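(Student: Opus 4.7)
All four bounds are Verf\"urth-style efficiency estimates, obtained by testing residuals against suitable bubble functions and then using the continuous PDEs (the Darcy system \eqref{eq:bulk1}--\eqref{eq:bulk2} and the fracture equation \eqref{eq:fracture}) to convert residuals into true errors. The broad recipe is the same throughout: apply \eqref{bubble1} or \eqref{eq:tr1} to bound a polynomial residual by its pairing with its bubble-multiplied version; integrate by parts; invoke \eqref{bubble2} or \eqref{extension} to handle the resulting $\nabla\phi$ terms; and absorb the resulting residual norm on the right-hand side.

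\emph{Estimate \eqref{eq:effup}.} This is purely algebraic. From $\bm{u}+K\nabla p=\bm{0}$ one has $K^{-1/2}\bm{u}+K^{1/2}\nabla p=\bm{0}$, so
\begin{align*}
K^{-1/2}\bm{u}_h+K^{1/2}\nabla p_h
= K^{-1/2}(\bm{u}_h-\bm{u})+K^{1/2}\nabla(p_h-p),
\end{align*}
and the triangle inequality on $\tau$ is the whole proof.

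\emph{Estimates \eqref{eq:eff-f} and \eqref{eq:effujump}.} For \eqref{eq:eff-f}, set $r_\tau:=f_h-\nabla\cdot\bm{u}_h\in P^k(\tau)$ and $\phi:=\psi_\tau r_\tau$, which vanishes on $\partial\tau$. Using \eqref{bubble1}, writing $f_h-\nabla\cdot\bm{u}_h=(f_h-f)+\nabla\cdot(\bm{u}-\bm{u}_h)$, and integrating by parts gives
\begin{align*}
\|r_\tau\|_{0,\tau}^2 \le C\int_\tau\phi(f_h-f)\,dx - C\int_\tau\nabla\phi\cdot(\bm{u}-\bm{u}_h)\,dx,
\end{align*}
and the inverse bound \eqref{bubble2} together with $\|f-\nabla\cdot\bm{u}_h\|_{0,\tau}\le\|f-f_h\|_{0,\tau}+\|r_\tau\|_{0,\tau}$ yields \eqref{eq:eff-f} after multiplication by $h_\tau$. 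For \eqref{eq:effujump}, let $\phi:=\psi_e[\bm{u}_h\cdot\bm{n}]$ extended to $D(e)$. Since $\bm{u}\in H(\mathrm{div};\Omega_B)$, $[\bm{u}\cdot\bm{n}]=0$ across $e$, hence $[\bm{u}_h\cdot\bm{n}]=[(\bm{u}_h-\bm{u})\cdot\bm{n}]$. Integration by parts element-wise on the two triangles forming $D(e)$ produces an interior residual $f-\nabla\cdot\bm{u}_h$ (estimated by \eqref{eq:eff-f}) and a gradient term handled via \eqref{extension}, and \eqref{eq:tr1} gives the claim after multiplying by $h_e^{1/2}$.

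\emph{Estimate \eqref{eq:eff-fracture} -- the main obstacle.} This is the most delicate case because the residual mixes a tangential 1D divergence along $\Gamma$ with the 2D normal jump $[\bm{u}_h\cdot\bm{n}_\Gamma]$. Write $R_e:=\ell_\Gamma f_{\Gamma,h}+\nabla_t\cdot(K_\Gamma\nabla_t p_{\Gamma,h})+[\bm{u}_h\cdot\bm{n}_\Gamma]$ and substitute $\ell_\Gamma f_\Gamma=-\nabla_t\cdot(K_\Gamma\nabla_t p_\Gamma)-[\bm{u}\cdot\bm{n}_\Gamma]$ from \eqref{eq:fracture} to obtain
\begin{align*}
R_e = \ell_\Gamma(f_{\Gamma,h}-f_\Gamma) + \nabla_t\cdot\bigl(K_\Gamma\nabla_t(p_{\Gamma,h}-p_\Gamma)\bigr) + [(\bm{u}_h-\bm{u})\cdot\bm{n}_\Gamma].
\end{align*}
Testing with $\phi:=\psi_e R_e$ (which vanishes at the endpoints of $e\subset\Gamma$) and integrating by parts tangentially in the middle term, the inverse estimate $\|\nabla_t\phi\|_{0,e}\le C h_e^{-1}\|R_e\|_{0,e}$ combined with \eqref{eq:tr1} yields
\begin{align*}
\|R_e\|_{0,e}\le C\Bigl(\|\ell_\Gamma(f_{\Gamma,h}-f_\Gamma)\|_{0,e}+h_e^{-1}\|K_\Gamma\nabla_t(p_\Gamma-p_{\Gamma,h})\|_{0,e}+\|[(\bm{u}-\bm{u}_h)\cdot\bm{n}_\Gamma]\|_{0,e}\Bigr).
\end{align*}
Multiplying by $h_e$ and inserting $\|\ell_\Gamma(f_\Gamma-f_{\Gamma,h})\|_{0,e}$ by triangle inequality on the original residual gives \eqref{eq:eff-fracture}. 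The subtle point is verifying that the endpoint boundary terms arising from the tangential integration by parts vanish, which is guaranteed by our choice $\phi=\psi_e R_e$; this is the only place where the one-dimensional nature of $\Gamma$ requires extra care compared with the classical bulk estimates.
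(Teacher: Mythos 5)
Your proposal is correct and follows essentially the same route as the paper: the triangle inequality with $\bm{u}=-K\nabla p$ for \eqref{eq:effup}, and for the remaining three bounds the standard Verf\"urth bubble-function argument (test the polynomial residual against its bubble-weighted version, substitute the continuous equations, integrate by parts, and apply the inverse estimates \eqref{bubble1}--\eqref{eq:tr1}), including the same tangential integration by parts with $\psi_e R_e$ vanishing at the endpoints for the fracture residual \eqref{eq:eff-fracture}. No gaps; the decompositions and intermediate bounds match the paper's proof step for step.
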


\begin{proof}

Let $R_\tau(f_h):=f_h-\nabla \cdot\bm{u}_h$, then Green's theorem, the Cauchy-Schwarz inequality, \eqref{bubble1} and \eqref{bubble2} imply
\begin{align*}
(R_\tau(f_h), \psi_\tau R_\tau(f_h))_{\tau}&=(\nabla \cdot (\bm{u}-\bm{u}_h), \psi_\tau R_\tau(f_h))_\tau+(f_h-f, \psi_\tau R_\tau(f_h))_\tau\\
&=(K^{-\frac{1}{2}}(\bm{u}-\bm{u}_h),
K^{\frac{1}{2}}\nabla( \psi_\tau R_\tau(f_h)))_\tau+(f_h-f, \psi_\tau R_\tau(f_h))_\tau\\
&\leq C\Big(\|K^{-\frac{1}{2}}(\bm{u}-\bm{u}_h)\|_{0,\tau}
\|K^{\frac{1}{2}}\nabla (\psi_\tau R_\tau(f_h))\|_{0,\tau}+\|f-f_h\|_{0,\tau}\|\psi_\tau R_\tau(f_h)\|_{0,\tau}\Big)\\
&\leq C\Big( \|K^{-\frac{1}{2}}(\bm{u}-\bm{u}_h)\|_{0,\tau}
h_{\tau}^{-1}K^{\frac{1}{2}}\|R_\tau(f_h)\|_{0,\tau}
+\|f-f_h\|_{0,\tau}\| R_\tau(f_h)\|_{0,\tau}\Big),
\end{align*}
Combining the above inequality with inequality \eqref{bubble1}, we can achieve
\begin{eqnarray*}
\|R_\tau(f_h)\|_{0,\tau}^2\leq C\Big( \|K^{-\frac{1}{2}}(\bm{u}-\bm{u}_h)\|_{0,\tau}h_\tau^{-1} K^{\frac{1}{2}}\|R_\tau(f_h)\|_{0,\tau}+\|f-f_h\|_{0,\tau} \|R_\tau(f_h)\|_{0,\tau}\Big),
\end{eqnarray*}
which gives \eqref{eq:eff-f}.

Then, we can estimate \eqref{eq:effup} by triangle inequality and the relationship $\bm{u}=-K\nabla p$
\begin{align*}
\|K^{-1/2}\bm{u}_h+K^{1/2}\nabla p_h\|_{0,\tau}\leq \|K^{-1/2}(\bm{u}_h-\bm{u})\|_{0,\tau}+\|K^{1/2}\nabla (p- p_h)\|_{0,\tau}.
\end{align*}
Next, we estimate \eqref{eq:effujump}. Green's theorem yields
\begin{align*}
([\bm{u}_h]\cdot \bm{n}, \psi_e[\bm{u}_h\cdot \bm{n}])_e&=\sum_{\tau\in D_e}((\bm{u}_h-\bm{u})\cdot \bm{n},\psi_e[\bm{u}_h\cdot \bm{n}] )_{\partial \tau}\\
&= \sum_{\tau\in D_e} \Big((\nabla \cdot \bm{u}_h-f,  \psi_e[\bm{u}_h\cdot \bm{n}])_\tau+(\bm{u}_h-\bm{u}, \nabla  (\psi_e[\bm{u}_h\cdot \bm{n}]))_\tau\Big).
\end{align*}
The following estimate can be achieved by \eqref{extension}, \eqref{eq:tr1}, \eqref{eq:eff-f} and inverse inequality
\begin{equation*}
\begin{split}
\|[\bm{u}_h\cdot \bm{n}]\|_{0,e}^2&\leq C \sum_{\tau\in D_e}  \Big(\|K^{-\frac{1}{2}}(\bm{u}-\bm{u}_h)\|_{0,\tau}
\|K^{\frac{1}{2}} \nabla ( \psi_e[\bm{u}_h\cdot \bm{n}])\|_{0,\tau}+\|f-\nabla \cdot \bm{u}_h\|_{0,\tau}\| \psi_e[\bm{u}_h\cdot \bm{n}]\|_{0,\tau}\Big)\\
&\leq C  \sum_{\tau\in D_e} \Big(\|K^{-\frac{1}{2}}(\bm{u}-\bm{u}_h)\|_{0,\tau}
h_{\tau}^{-1}
\|K^{\frac{1}{2}} \psi_e[\bm{u}_h\cdot \bm{n}]\|_{0,\tau}+\|f-\nabla \cdot \bm{u}_h\|_{0,\tau}\|  \psi_e[\bm{u}_h\cdot \bm{n}]\|_{0,\tau}\Big)\\
&\leq C \Big(\Big(\sum_{\tau\in D_e}\|K^{-\frac{1}{2}}(\bm{u}-\bm{u}_h)\|_{0,\tau}^2
\Big)^{\frac{1}{2}}+\Big(\sum_{\tau\in D_e}h_\tau^2K^{-1}
\|f-f_h\|_{0,\tau}^2\Big)^{\frac{1}{2}}\Big) h_e^{-\frac{1}{2}}\| [\bm{u}_h\cdot \bm{n}]\|_{0,e},
\end{split}
\end{equation*}
which gives \eqref{eq:effujump} by dividing both sides of the above equation by $h_{e}^{-1/2}\|[\bm{u}_h\cdot \bm{n}]\|_{0,e}$.

It remains to estimate \eqref{eq:eff-fracture}.
Let $R_e(f_{\Gamma})=\ell_\Gamma f_\Gamma+\nabla_t \cdot (K_\Gamma\nabla_t p_{\Gamma,h})+[\bm{u}_h\cdot\bm{n}_\Gamma],e\in \mathcal{F}_h^\Gamma$, it then follows from Green's theorem, Cauchy-Schwarz inequality and \eqref{eq:tr1}
\begin{align*}
\langle R_e(f_{\Gamma,h}),\psi_e R_e(f_{\Gamma,h}) \rangle_e&=\langle  -\nabla_t\cdot (K_\Gamma \nabla_t (p_\Gamma-p_{\Gamma,h}),\psi_e R_e(f_{\Gamma,h})\rangle_e-\langle [\bm{u}\cdot\bm{n}_\Gamma-\bm{u}_h\cdot\bm{n}_\Gamma], \psi_e R_e(f_{\Gamma,h}) \rangle_e\\
&\;+\langle \ell_\Gamma (f_{\Gamma,h}-f_\Gamma), \psi_e R_e(f_{\Gamma,h})\rangle_e\\
&=\langle K_\Gamma \nabla_t (p_\Gamma-p_{\Gamma,h}), \nabla_t (\psi_e R_e(f_{\Gamma,h})) \rangle_e-\langle [\bm{u}\cdot\bm{n}_\Gamma-\bm{u}_h\cdot\bm{n}_\Gamma], \psi_e R_e(f_{\Gamma,h}) \rangle_e\\
&\;+\langle \ell_\Gamma (f_{\Gamma,h}-f_\Gamma), \psi_e R_e(f_{\Gamma,h})\rangle_e\\
&\leq \|K_\Gamma \nabla_t (p_\Gamma-p_{\Gamma,h})\|_{0,e}\| \nabla_t (\psi_e R_e(f_{\Gamma,h}))\|_{0,e}\\
&\;+\|[\bm{u}\cdot\bm{n}_\Gamma-\bm{u}_h\cdot\bm{n}_\Gamma]\|_{0,e}\|\psi_e R_e(f_{\Gamma,h})\|_{0,e}+\|\ell_\Gamma (f_{\Gamma,h}-f_\Gamma)\|_{0,e}\|\psi_e R_e(f_{\Gamma,h})\|_{0,e}\\
&\leq C \Big(h_e^{-1}\|K_\Gamma \nabla_t (p_\Gamma-p_{\Gamma,h})\|_{0,e}\|  R_e(f_{\Gamma,h})\|_{0,e}\\
&\;+
\|[\bm{u}\cdot\bm{n}_\Gamma-\bm{u}_h\cdot\bm{n}_\Gamma]\|_{0,e}\|R_e(f_{\Gamma,h})\|_{0,e}
+\|\ell_\Gamma (f_{\Gamma,h}-f_\Gamma)\|_{0,e}\|R_e(f_{\Gamma,h})\|_{0,e}\Big).
\end{align*}
Therefore
\begin{align*}
\|R_e(f_{\Gamma,h})\|_{0,e}^2&\leq C \Big(h_e^{-1}\|K_\Gamma \nabla_t (p_\Gamma-p_{\Gamma,h})\|_{0,e}\|  R_e(f_{\Gamma,h})\|_{0,e}
+\|[\bm{u}\cdot\bm{n}_\Gamma-\bm{u}_h\cdot\bm{n}_\Gamma]\|_{0,e}\|R_e(f_{\Gamma,h})\|_{0,e}\\
&\;+\|\ell_\Gamma (f_{\Gamma,h}-f_\Gamma)\|_{0,e}\|R_e(f_{\Gamma,h})\|_{0,e}\Big),
\end{align*}
which leads to \eqref{eq:eff-fracture}.

\end{proof}

\begin{lemma}\label{lemma:interfacej}

Let $(\bm{u}_h,p_h,p_{\Gamma,h})$ be the discrete solution of \eqref{eq:discrete}, then we have for $e\in \mathcal{F}_h^\Gamma$
\begin{align*}
h_e^{1/2}\|(\{p_h\}-p_{\Gamma,h})-\alpha_\Gamma[\bm{u}_h]\|_{0,e}&\leq C\Big(h_e^{1/2}\|\alpha_\Gamma^{-1/2}(\{p-p_h\}-(p_\Gamma-p_{\Gamma,h}))\|_{0,e}
+h_e^{1/2}\|[(\bm{u}-\bm{u}_h)\cdot\bm{n}_\Gamma]\|_{0,e}\Big),\\
h_e^{1/2}\|\eta_\Gamma\{\bm{u}_h\cdot\bm{n}_\Gamma\}-[p_h]\|_{0,e}&\leq C\Big(h_e^{1/2}\|\eta_\Gamma^{-1/2}[p-p_h]\|_{0,e}
+h_e^{1/2}\|\{(\bm{u}-\bm{u}_h)\cdot\bm{n}_\Gamma\}\|_{0,e}\Big).
\end{align*}
\end{lemma}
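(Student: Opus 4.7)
The plan is to exploit the fact that the exact solution $(\bm u, p, p_\Gamma)$ satisfies the interface conditions \eqref{eq:interface} pointwise on $\Gamma$, so the two quantities appearing on the left-hand sides are nothing but discretization residuals of these interface identities. Concretely, since $\alpha_\Gamma[\bm u\cdot\bm n_\Gamma] = \{p\}-p_\Gamma$ on $\Gamma$, I would write
\begin{align*}
(\{p_h\}-p_{\Gamma,h}) - \alpha_\Gamma[\bm u_h\cdot\bm n_\Gamma]
 &= \bigl[(\{p_h\}-p_{\Gamma,h}) - \alpha_\Gamma[\bm u_h\cdot\bm n_\Gamma]\bigr]
  - \bigl[(\{p\}-p_\Gamma) - \alpha_\Gamma[\bm u\cdot\bm n_\Gamma]\bigr] \\
 &= -\bigl(\{p-p_h\}-(p_\Gamma-p_{\Gamma,h})\bigr) + \alpha_\Gamma[(\bm u-\bm u_h)\cdot\bm n_\Gamma],
\end{align*}
and analogously, from $[p]=\eta_\Gamma\{\bm u\cdot\bm n_\Gamma\}$,
\begin{align*}
\eta_\Gamma\{\bm u_h\cdot\bm n_\Gamma\} - [p_h] = \eta_\Gamma\{(\bm u_h-\bm u)\cdot\bm n_\Gamma\} - [p_h-p].
\end{align*}

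Once the residuals are rewritten in this form, the rest is a one-line triangle inequality on each edge $e\in\mathcal{F}_h^\Gamma$. To land on the precise weighted norms appearing on the right-hand side, I would factor the coefficient explicitly, e.g.\
\[
\|\{p-p_h\}-(p_\Gamma-p_{\Gamma,h})\|_{0,e} = \|\alpha_\Gamma^{1/2}\cdot\alpha_\Gamma^{-1/2}(\{p-p_h\}-(p_\Gamma-p_{\Gamma,h}))\|_{0,e},
\]
and pull $\alpha_\Gamma^{1/2}$ out. The assumptions made earlier on $\ell_\Gamma$ and $\kappa_\Gamma^n$ (piecewise constant, with $\kappa_\Gamma^n$ bounded below by $\kappa_1^n>0$) give $\alpha_\Gamma^{1/2}\le C$, which absorbs the coefficient into the generic constant. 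The same trick with $\eta_\Gamma^{1/2}$ handles the second inequality. Finally, multiplying both sides by $h_e^{1/2}$ yields the stated bounds.

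There is no real obstacle here beyond bookkeeping of the weights $\alpha_\Gamma$ and $\eta_\Gamma$: the proof reduces to algebraic manipulation of the interface conditions, a triangle inequality, and the uniform boundedness of the model parameters. In particular, no bubble-function argument or inverse estimate is required for this lemma, since the two quantities being estimated are interface residuals rather than element or edge residuals of the PDE.
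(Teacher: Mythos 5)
Your proposal is correct and follows exactly the paper's argument: the paper's proof is the single sentence ``the desired estimates follow directly from the triangle inequality and the interface conditions \eqref{eq:interface},'' and your rewriting of the left-hand sides as differences of discrete and exact interface residuals, followed by the triangle inequality and absorption of the bounded weights $\alpha_\Gamma^{1/2}$, $\eta_\Gamma^{1/2}$ into the generic constant, is just that argument written out in full.
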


\begin{proof}
The desired estimates follows directly from triangle inequality and the interface conditions \eqref{eq:interface}.
\end{proof}

\begin{lemma}\label{lemma:eff2}
Let $p_{\Gamma,h}$ be the discrete solution of \eqref{eq:discrete} and $f_{\Gamma,h}$ be the piecewise polynomial approximation of $f_\Gamma$, then we have for any $z\in \mathcal{N}_h^0(\Gamma)$
\begin{align*}
h_z^{1/2}|[K_\Gamma^{1/2}\nabla_tp_{\Gamma,h}]|_z&\leq C\Big(\|K_\Gamma^{1/2} \nabla_t (p_\Gamma-p_{\Gamma,h})\|_{0,e_z}+h_z\|[\bm{u}\cdot\bm{n}_\Gamma-\bm{u}_h\cdot\bm{n}_\Gamma]\|_{0,e_z}+h_z\|\ell_\Gamma (f_{\Gamma,h}-f_\Gamma)\|_{0,e_z}\Big),
\end{align*}
where $e_z$ represents the two edges belonging to $\mathcal{F}_h^\Gamma$ sharing the common vertex $z$.

\end{lemma}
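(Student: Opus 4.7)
The plan is to localize at the vertex $z$ with a hat-type bubble on $e_z$ and then read off the flux jump $[K_\Gamma\nabla_tp_{\Gamma,h}]_z$ via edgewise integration by parts. Let $\phi_z\in H^1_0(\Gamma)$ denote the standard continuous piecewise linear hat function supported on $e_z$, normalized so that $\phi_z(z)=1$ and $\phi_z$ vanishes at the two outer endpoints of $e_z$. Standard scaling gives $\|\phi_z\|_{0,e_z}\leq C h_z^{1/2}$ and $\|\nabla_t\phi_z\|_{0,e_z}\leq C h_z^{-1/2}$, and these two estimates will drive the final bound.

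The key step is to integrate by parts on each of the two edges in $e_z$ separately. Since $p_{\Gamma,h}$ is polynomial on each edge and $\phi_z$ vanishes at the outer endpoints, the only surviving boundary contributions come from $z$ and combine into the oriented vertex jump,
\[
\int_{e_z}K_\Gamma\nabla_tp_{\Gamma,h}\cdot\nabla_t\phi_z\,dt = -\int_{e_z}\nabla_t(K_\Gamma\nabla_tp_{\Gamma,h})\,\phi_z\,dt + [K_\Gamma\nabla_tp_{\Gamma,h}]_z.
\]
On the continuous side, testing the strong form $-\nabla_t(K_\Gamma\nabla_tp_\Gamma)=\ell_\Gamma f_\Gamma+[\bm{u}\cdot\bm{n}_\Gamma]$ with $\phi_z$ and using normal flux continuity of $p_\Gamma$ at $z$ gives
\[
\int_{e_z}K_\Gamma\nabla_tp_\Gamma\cdot\nabla_t\phi_z\,dt = \int_{e_z}(\ell_\Gamma f_\Gamma+[\bm{u}\cdot\bm{n}_\Gamma])\,\phi_z\,dt.
\]
Subtracting and rewriting $\nabla_t(K_\Gamma\nabla_tp_{\Gamma,h})$ in terms of the edge residual $R_e:=\ell_\Gamma f_\Gamma+\nabla_t(K_\Gamma\nabla_tp_{\Gamma,h})+[\bm{u}_h\cdot\bm{n}_\Gamma]$ yields the identity
\[
[K_\Gamma\nabla_tp_{\Gamma,h}]_z = -\int_{e_z}K_\Gamma\nabla_t(p_\Gamma-p_{\Gamma,h})\cdot\nabla_t\phi_z\,dt + \int_{e_z}\big(R_e+[(\bm{u}-\bm{u}_h)\cdot\bm{n}_\Gamma]\big)\phi_z\,dt.
\]

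From here the rest is routine. I apply Cauchy--Schwarz termwise and insert the scaling of $\phi_z$: the first term yields $\|K_\Gamma^{1/2}\nabla_t(p_\Gamma-p_{\Gamma,h})\|_{0,e_z}$ (after a factor of $h_z^{-1/2}$), the flux-mismatch term yields $h_z^{1/2}\|[(\bm{u}-\bm{u}_h)\cdot\bm{n}_\Gamma]\|_{0,e_z}$, and the residual term contributes $h_z^{1/2}\|R_e\|_{0,e_z}$. For the last piece I invoke \eqref{eq:eff-fracture} in Lemma~\ref{lemma:eff1}, which controls $h_e\|R_e\|_{0,e}$ by the elliptic error, the flux-mismatch and the data oscillation $h_e\|\ell_\Gamma(f_{\Gamma,h}-f_\Gamma)\|_{0,e}$; summing over the two edges of $e_z$ and multiplying through by $h_z^{1/2}$ produces exactly the right-hand side of the lemma.

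The main delicate point will be sign bookkeeping in the two edgewise integrations by parts, so that the boundary contributions at $z$ combine (rather than cancel) into the oriented jump $[K_\Gamma\nabla_tp_{\Gamma,h}]_z$ with the convention fixed in Lemma~\ref{lemma:dual}, and the reconciliation of the $K_\Gamma$-weighting: the identity above delivers a jump of $K_\Gamma\nabla_tp_{\Gamma,h}$, while the lemma asks for a jump of $K_\Gamma^{1/2}\nabla_tp_{\Gamma,h}$, and one absorbs the local $K_\Gamma^{1/2}$ by using the piecewise-constant assumption on $K_\Gamma$ together with the assumed bounds $\kappa_1^*\le\kappa_\Gamma^*\le\kappa_2^*$.
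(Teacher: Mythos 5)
Your proposal is correct and follows essentially the same route as the paper: the paper's vertex bubble $\psi_z$ multiplied by the constant extension of the jump is, up to a constant factor, exactly your hat function $\phi_z$, and both arguments proceed by edgewise integration by parts to isolate the vertex flux jump, Cauchy--Schwarz with the standard $h_z^{\pm 1/2}$ scalings of the bubble, and a final appeal to \eqref{eq:eff-fracture} to absorb the residual term. The sign bookkeeping and the $K_\Gamma$ versus $K_\Gamma^{1/2}$ weighting you flag are handled (somewhat implicitly) in the paper via the piecewise-constant bounds on $\kappa_\Gamma^*$ and $\ell_\Gamma$, just as you propose.
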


\begin{proof}

We use $\psi_z$ to stand for the vertex bubble function associated to the vertex $z$. Then, by equivalence of norms on finite-dimensional spaces, there holds
\begin{align}
[\nabla_tp_{\Gamma,h}]_z  [\nabla_tp_{\Gamma,h}]_z\leq C [\nabla_tp_{\Gamma,h}]_z\psi_z[\nabla_tp_{\Gamma,h}]_z.\label{eq:gradzf}
\end{align}
We keep the same notation for the constant extension of the function $[K_\Gamma\nabla_tp_{\Gamma,h}]$, originally only defined on the vertex $z$, to a function defined on the edge $e_z$.
Therefore, it holds
\begin{align}
\|[\nabla_tp_{\Gamma,h}]\|_{e_z}\leq C h_z^{1/2}|[\nabla_tp_{\Gamma,h}]_z|.\label{eq:fez}
\end{align}
In addition, we also have
\begin{align}
\|\psi_z [\nabla_t p_{\Gamma,h}]\|_{0,e_z}\leq C \|[\nabla_t p_{\Gamma,h}]\|_{0,e_z}.\label{eq:f1}
\end{align}
It follows from Green's theorem, inverse inequality and \eqref{eq:gradzf}-\eqref{eq:f1}
\begin{align*}
C|[K_\Gamma^{1/2}\nabla_tp_{\Gamma,h}]_z|^2&\leq [K_\Gamma\nabla_tp_{\Gamma,h}]_z\psi_z ([\nabla_tp_{\Gamma,h}]_z)\\
&=\langle \ell_{\Gamma}f_\Gamma+\nabla_t \cdot (K_\Gamma\nabla_t p_{\Gamma,h})+[\bm{u}\cdot \bm{n}_\Gamma], \psi_z ([\nabla_tp_{\Gamma,h}])\rangle_{e_z}\\
&\;+\langle \nabla_t K_\Gamma(p_{\Gamma,h}-p_\Gamma),\nabla_t (\psi_z ([\nabla_tp_{\Gamma,h}])) \rangle_{e_z}\\
&\leq C \Big(\|\ell_{\Gamma}f_\Gamma+\nabla_t \cdot (K_\Gamma \nabla_t p_{\Gamma,h})+[\bm{u}_h\cdot \bm{n}_\Gamma]\|_{0,e_z}\|\psi_z ([\nabla_tp_{\Gamma,h}])\|_{0,e_z}\\
&\;+\|[\bm{u}\cdot\bm{n}_\Gamma-\bm{u}_h\cdot\bm{n}_\Gamma]\|_{0,e_z}\|\psi_z ([\nabla_tp_{\Gamma,h}])\|_{0,e_z}\\
&\;+\|\nabla_t K_\Gamma(p_{\Gamma,h}-p_\Gamma)\|_{0,e_z}\|\nabla_t (\psi_z ([\nabla_tp_{\Gamma,h}]))\|_{0,e_z}\Big)\\
&\leq C \Big(\|\ell_{\Gamma}f_\Gamma+\nabla_t \cdot (K_\Gamma \nabla_t p_{\Gamma,h})+[\bm{u}_h\cdot \bm{n}_\Gamma]\|_{0,e_z}\| [\nabla_tp_{\Gamma,h}]\|_{0,e_z}\\
&\;+\|[\bm{u}\cdot\bm{n}_\Gamma-\bm{u}_h\cdot\bm{n}_\Gamma]\|_{0,e_z}\| [\nabla_tp_{\Gamma,h}]\|_{0,e_z}
+h_z^{-1}\|\nabla_t K_\Gamma(p_{\Gamma,h}-p_\Gamma)\|_{0,e_z}\| [\nabla_tp_{\Gamma,h}]\|_{0,e_z}\Big)\\
&\leq C\Big(h_z^{1/2}\|\ell_{\Gamma}f_\Gamma+\nabla_t \cdot (K_\Gamma\nabla_t p_{\Gamma,h})+[\bm{u}_h\cdot \bm{n}_\Gamma]\|_{0,e_z} |[\nabla_tp_{\Gamma,h}]_z|\\
&\;+h_z^{1/2}\|[\bm{u}\cdot\bm{n}_\Gamma-\bm{u}_h\cdot\bm{n}_\Gamma]\|_{0,e_z}|[\nabla_tp_{\Gamma,h}]_z|
+h_z^{-1/2}\|\nabla_t K_\Gamma(p_{\Gamma,h}-p_\Gamma)\|_{0,e_z}| [\nabla_tp_{\Gamma,h}]_z|\Big).
\end{align*}
The desired estimate holds by applying \eqref{eq:eff-fracture}.

\end{proof}

\begin{theorem}

Combining Lemma~\ref{lemma:eff1}, Lemma~\ref{lemma:interfacej} and Lemma~\ref{lemma:eff2}, we can obtain
\begin{align*}
\eta\leq C \|(\bm{u}-\bm{u}_h,p-p_h,p_\Gamma-p_{\Gamma,h})\|_{sdg}+\mbox{osc}(f,f_\Gamma),
\end{align*}
where $\mbox{osc}(f,f_\Gamma)$ is the data oscillation and is defined by $\mbox{osc}(f,f_\Gamma)^2=\sum_{e\in \mathcal{F}_h^\Gamma}h_e^2\|\ell_\Gamma (f_{\Gamma,h}-f_\Gamma)\|_{0,e}^2+\sum_{\tau\in \mathcal{T}_h}h_\tau^2
\|f-f_h\|_{0,\tau}^2$.

\end{theorem}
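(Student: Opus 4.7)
The plan is to expand $\eta^{2}$ as a sum over the eight local terms appearing in the definition of $\eta$, bound each contribution individually by a piece of $\|(\bm{u}-\bm{u}_{h},p-p_{h},p_{\Gamma}-p_{\Gamma,h})\|_{sdg}^{2}$ or by an oscillation term, and then sum up using the shape regularity of $\mathcal{T}_{h}$ (Assumptions (A)--(B)). Since the statement invites us to simply combine Lemmas~\ref{lemma:eff1}, \ref{lemma:interfacej} and \ref{lemma:eff2}, the argument is largely bookkeeping; the only genuinely delicate term is the nonconforming jump $\sum_{e\in\mathcal{F}_{p}}h_{e}^{-1}\|[p_{h}]\|_{0,e}^{2}$, which is not addressed directly by any of the three lemmas.

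First I would dispatch the interior bulk residuals. The constitutive residual $\|K^{-1/2}\bm{u}_{h}+K^{1/2}\nabla p_{h}\|_{0,\tau}$ is bounded via \eqref{eq:effup} in terms of $\|K^{-1/2}(\bm{u}-\bm{u}_{h})\|_{0,\tau}$ and $\|K^{1/2}\nabla(p-p_{h})\|_{0,\tau}$; the mass-conservation residual $h_{\tau}\|f-\nabla\cdot\bm{u}_{h}\|_{0,\tau}$ is controlled by \eqref{eq:eff-f}; and the normal-flux jump $h_{e}^{1/2}\|[\bm{u}_{h}\cdot\bm{n}]\|_{0,e}$ for $e\in\mathcal{F}_{u}^{0}$ is covered by \eqref{eq:effujump}. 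Next, the fracture tangential residual and the nodal jump $h_{z}^{1/2}|[K_{\Gamma}^{1/2}\nabla_{t}p_{\Gamma,h}]_{z}|$ are settled by \eqref{eq:eff-fracture} and Lemma~\ref{lemma:eff2}, producing $\|K_{\Gamma}^{1/2}\nabla_{t}(p_{\Gamma}-p_{\Gamma,h})\|$, $\|[(\bm{u}-\bm{u}_{h})\cdot\bm{n}_{\Gamma}]\|$ and the fracture oscillation $h_{e}\|\ell_{\Gamma}(f_{\Gamma,h}-f_{\Gamma})\|$. Finally, the two interface coupling residuals fall to Lemma~\ref{lemma:interfacej}, which converts them into the coupling seminorms $\|\alpha_{\Gamma}^{-1/2}(\{p-p_{h}\}-(p_{\Gamma}-p_{\Gamma,h}))\|_{0,e}$ and $\|\eta_{\Gamma}^{-1/2}[p-p_{h}]\|_{0,e}$ from $\|(\cdot,\cdot)\|_{V}$, together with the interface flux contributions $\|[(\bm{u}-\bm{u}_{h})\cdot\bm{n}_{\Gamma}]\|$ and $\|\{(\bm{u}-\bm{u}_{h})\cdot\bm{n}_{\Gamma}\}\|$ that already sit inside $\|\cdot\|_{sdg}$.

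The hard part will be the $\mathcal{F}_{p}$-jump term. Because every $e\in\mathcal{F}_{p}$ lies strictly in the interior of a single bulk subdomain $\Omega_{B,i}$, the exact pressure $p$ is continuous across $e$, so $[p]|_{e}=0$ and $[p_{h}]|_{e}=[p_{h}-p]|_{e}$. I would control this by a classical Karakashian--Pascal type edge-bubble lifting: test $[p_{h}]$ against $\psi_{e}[p_{h}]$ on $e$ via \eqref{eq:tr1}, integrate by parts elementwise across $D(e)$ to bring in the broken gradient $\nabla_{h}(p-p_{h})$, use the Darcy identity $\bm{u}=-K\nabla p$ to trade the resulting gradient terms for the flux error and the bulk mass residual, and finish with \eqref{extension} together with the inverse inequality. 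This should yield $h_{e}^{-1/2}\|[p_{h}]\|_{0,e}\leq C(\|K^{1/2}\nabla(p-p_{h})\|_{0,D(e)}+\|K^{-1/2}(\bm{u}-\bm{u}_{h})\|_{0,D(e)}+h_{\tau}\|f-f_{h}\|_{0,D(e)})$, all of which live inside the sdg-error plus the oscillation.

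Squaring each local inequality, summing over $\tau\in\mathcal{T}_{h}$, $e\in\mathcal{F}$ and $z\in\mathcal{N}_{h}^{0}(\Gamma)$, and using shape regularity to bound the overlap multiplicity of the patches $\omega_{\tau}$, $\omega_{e}$, $D(e)$ and $e_{z}$ by a mesh-independent constant then delivers
$$\eta\leq C\|(\bm{u}-\bm{u}_{h},p-p_{h},p_{\Gamma}-p_{\Gamma,h})\|_{sdg}+\mbox{osc}(f,f_{\Gamma}),$$
as claimed.
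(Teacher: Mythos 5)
Your combination of Lemmas~\ref{lemma:eff1}, \ref{lemma:interfacej} and \ref{lemma:eff2} for the element residuals, the normal-flux jumps, the fracture residual, the nodal tangential-gradient jumps and the two interface residuals is exactly what the paper does --- its proof consists of nothing more than the phrase ``combining the three lemmas'' --- and your bookkeeping (square, sum, use finite overlap of the patches) is the intended argument for those seven contributions. You also correctly observe that the eighth term of $\eta$, namely $\sum_{e\in\mathcal{F}_p}h_e^{-1}\|[p_h]\|_{0,e}^2$, is addressed by none of the three cited lemmas; this is a real omission in the paper itself, so flagging it is to your credit.

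However, the repair you sketch for that term does not go through. The edge-bubble/integration-by-parts machinery of Lemma~\ref{bubblel} is built to convert \emph{normal-flux} jumps and element residuals into volume errors; applied to the \emph{solution} jump $[p_h]=[p_h-p]$ it unavoidably produces a term of the form $h_e^{-1}\|p-p_h\|_{0,D(e)}$ (from the $\int_\tau (p_h-p)\,\nabla\cdot\bm{w}$ contribution with $\|\nabla\cdot\bm{w}\|\lesssim h_\tau^{-1}\|\bm{w}\|$), and this $L^2$ quantity is not controlled by $\|(\bm{u}-\bm{u}_h,p-p_h,p_\Gamma-p_{\Gamma,h})\|_{sdg}$. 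Indeed, the local bound you claim, $h_e^{-1/2}\|[p_h]\|_{0,e}\leq C(\|K^{1/2}\nabla(p-p_h)\|_{0,D(e)}+\|K^{-1/2}(\bm{u}-\bm{u}_h)\|_{0,D(e)}+h_\tau\|f-f_h\|_{0,D(e)})$, is false for arbitrary piecewise polynomials: take $p_h$ equal to two different constants on the two triangles of $D(e)$ with $p$, $\bm{u}$, $\bm{u}_h$, $f$ all zero --- the right-hand side vanishes while the left does not. Any valid efficiency proof for this term must therefore exploit the discrete equations \eqref{eq:discrete} (i.e.\ the staggered continuity structure of $S_h$) or, as in Karakashian--Pascal, measure the error in a norm augmented by $\sum_{e\in\mathcal{F}_p}h_e^{-1}\|[p-p_h]\|_{0,e}^2$, in which case the bound is immediate because $[p]|_e=0$ for $e\in\mathcal{F}_p$. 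As written, neither your argument nor the paper's establishes efficiency of this jump contribution in the stated $\|\cdot\|_{sdg}$ norm.
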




\section{Numerical experiments}\label{sec:numerical}

In this section we present several numerical experiments to verify the accuracy and efficiency of the proposed error estimators. We first consider fractured porous media domain with one single fracture, then to further indicate that our method can be applied to multiple fractures, we also carry out numerical experiments for fractured porous media with multiple non-intersecting fractures. The adaptive mesh refinement algorithm can be referred to \cite{LinaPark}. Notice that our method can handle fairly general meshes and hanging nodes can be simply incorporated into the construction of the method, which greatly simplifies the refinement procedure.

In the following examples, we set $K=Id$ and the thickness of the fracture is defined by $\ell_\Gamma=0.01$, where $Id$ is the two dimensional identity matrix. In addition we set $\xi=3/4$.

\subsection{Single fracture on rectangular domain}\label{example1}

We consider $\Omega_B = (0,2)\times (0,1)$ with only one fracture $\Gamma=\{1\}\times (0,1)$. Let $\Omega_1=(0,1)^2$ and $\Omega_2=(1,2)\times (0,1)$. In the first case, we consider the model problem with exact pressure solution $p\mid_{\Omega_1}=y+1/2\tanh(\frac{x-1}{\alpha})+1/2$, $p\mid_{\Omega_2}=y+1/2\tanh(\frac{x-1}{2\alpha})+1/2+\frac{3\eta_\Gamma}{8\alpha}$, $p_\Gamma=y+1/2+\frac{3\eta_\Gamma}{16\alpha}+\frac{\alpha_\Gamma}{4\alpha}$, where $\alpha$ is an optional parameter.
In addition, we define $\kappa_\Gamma^n=\kappa_\Gamma^*=100$. We enforce Dirichlet boundary conditions for both the surrounding porous media boundary and the fracture boundaries.

The convergence history for both the error estimator and the total error against the number of degrees of freedom ($N$) for the polynomial order $k=1,2$ is reported in Figure~\ref{ex1-case1-con}. We can observe that expected convergence rates $\mathcal{O}(N^{-k/2})$ can be achieved for $k=1,2$ with $\alpha=0.1$ and $\alpha=0.01$. Moreover, we also display the effectivity index $\mbox{EI}:=\frac{\eta}{\|(\bm{u}-\bm{u}_h,p-p_h, p_\Gamma-p_{\Gamma,h})\|_{sdg}}$ in Figure~\ref{ex1-case1-con}, which shows that the effectivity index lay in the range of $1.4-1.6$ for different values of $\alpha$. The adaptive mesh pattern and the corresponding numerical approximation for pressure for $k=2$ and $\alpha=0.01$ are shown in Figure~\ref{ex1-case1-mesh}. We can see that the mesh is locally refined near the fracture $\Gamma$ due to the fact that a transition layer is introduced for the pressure $p$ near $\Gamma$ as one can see from the numerical approximation for pressure (cf. Figure~\ref{ex1-case1-mesh}).

In the second case, we consider the model problem without exact solutions. The normal permeability in the fracture is define by: $\kappa_\Gamma^n=\kappa_\Gamma^*=200$ on $\Gamma_1=\{1\}\times ([0,1/4]\cup[3/4,1])$ and $\kappa_\Gamma^n=\kappa_\Gamma^*=0.002$ on $\Gamma_2=\{1\}\times [1/4,3/4]$. We impose homogeneous Nerumann boundary conditions for the fracture boundaries. For the surrounding porous media, the Dirichlet boundary condition is given by $p=1$ on $\{2\}\times [0,1]$ and $p=0$ on $\{0\}\times [0,1]$, and the remaining part of $\partial \Omega$ is homogeneous Neumann boundary condition.

The adaptive mesh pattern is reported in Figure~\ref{ex1-case2}, and we can see that the mesh is locally refined near the ends of $\Gamma_2$. This is because of the fact that the permeability for $\Gamma_1$ and $\Gamma_2$ is different and $\Gamma_2$ represents a barrier due to the small permeability. This is consistent with the numerical approximation for pressure (cf. Figure~\ref{ex1-case2}), where pressure is discontinuous across $\Gamma_2$. Again, we show the convergence history for $\eta$ against the number of degrees of freedom for $k=1$ and $k=2$ under uniform refinement and adaptive refinement in Figure~\ref{ex1-case2}. We can observe that optimal convergence rates can be achieved under adaptive refinement, thus we can conclude that adaptive mesh refinement outperforms uniform mesh refinement.

\begin{figure}[t]
    \centering
    \includegraphics[width=0.32\textwidth]{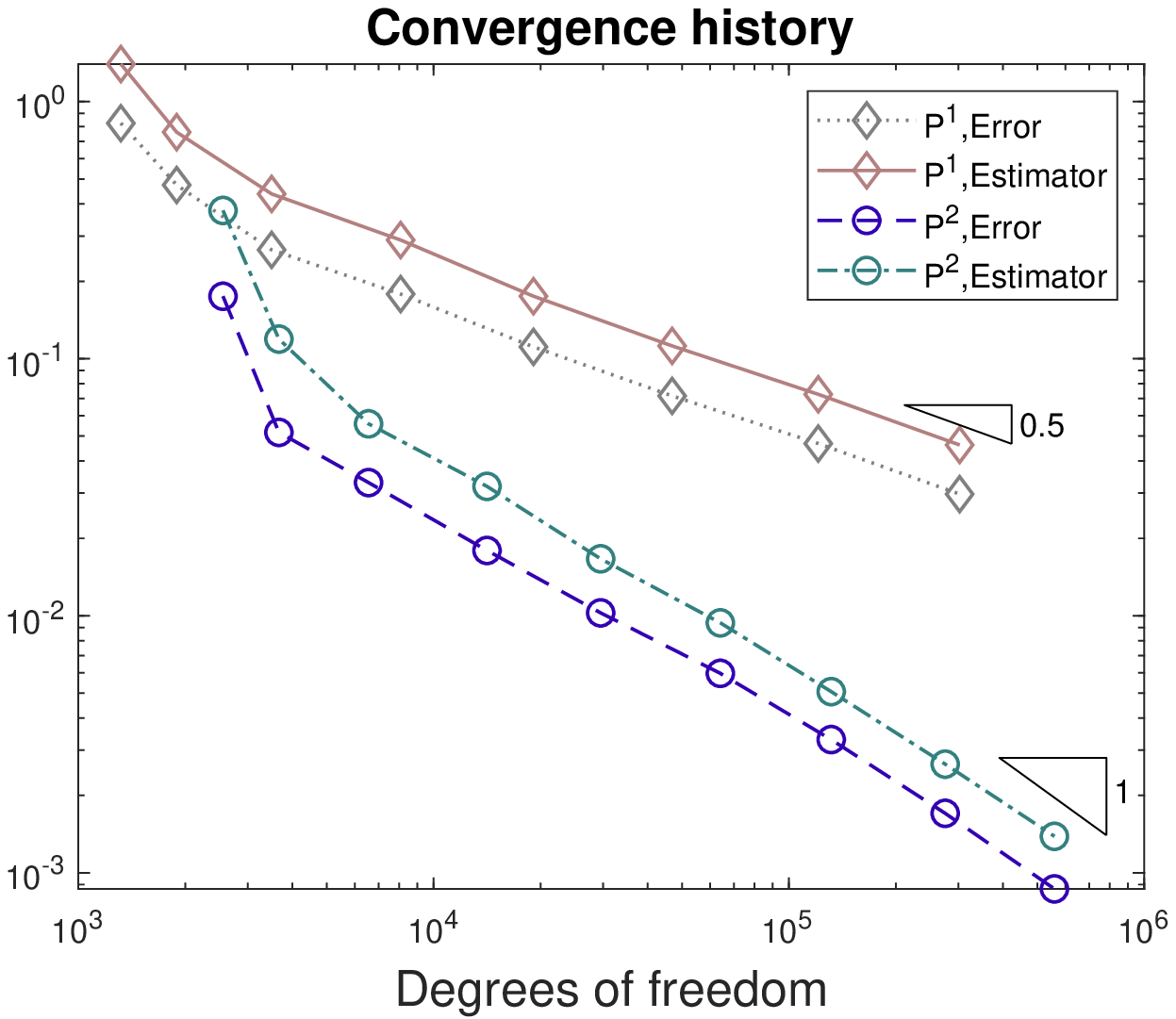}
    \includegraphics[width=0.32\textwidth]{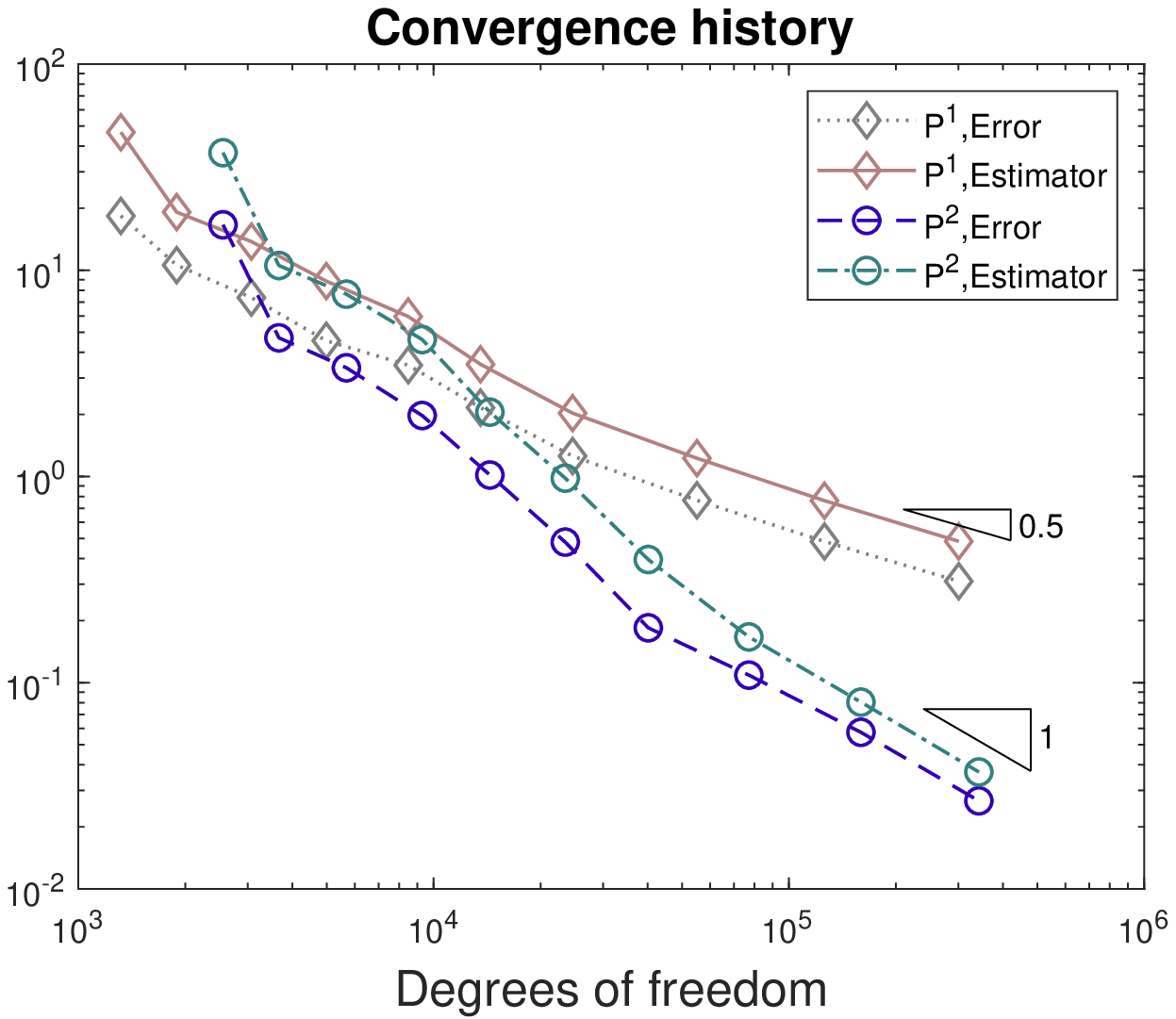}
     \includegraphics[width=0.32\textwidth]{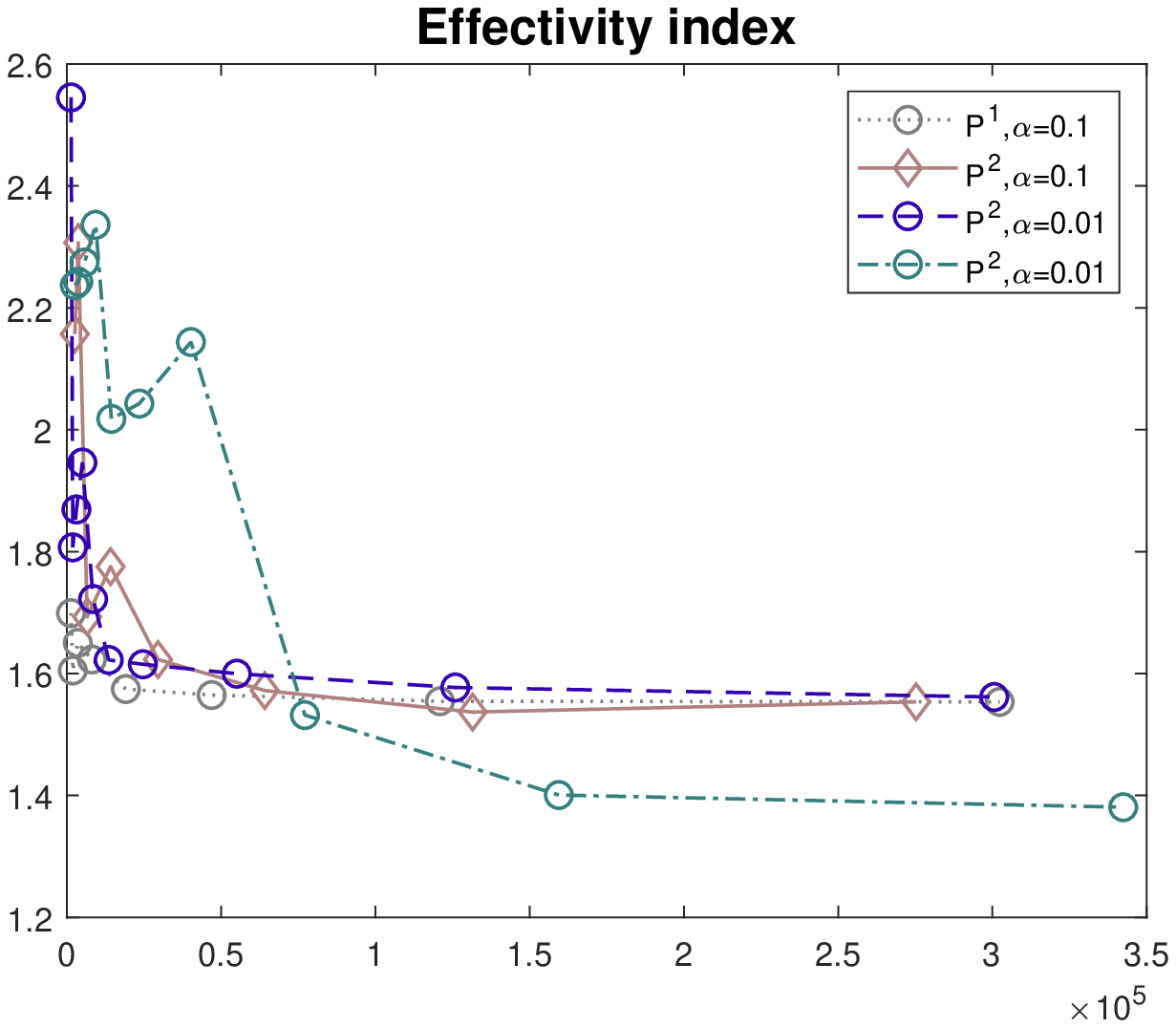}
    \caption{Convergence history of the a posteriori error estimator $\eta$ and the error $\|(\bm{u}-\bm{u}_h,p-p_h, p_\Gamma-p_{\Gamma,h})\|_{sdg}$. Left, $\alpha=0.1$. Middle, $\alpha=0.01$. Right, effectivity index for both cases for Example~\ref{example1}.}
    \label{ex1-case1-con}
\end{figure}

\begin{figure}[t]
    \centering
    \includegraphics[width=0.35\textwidth]{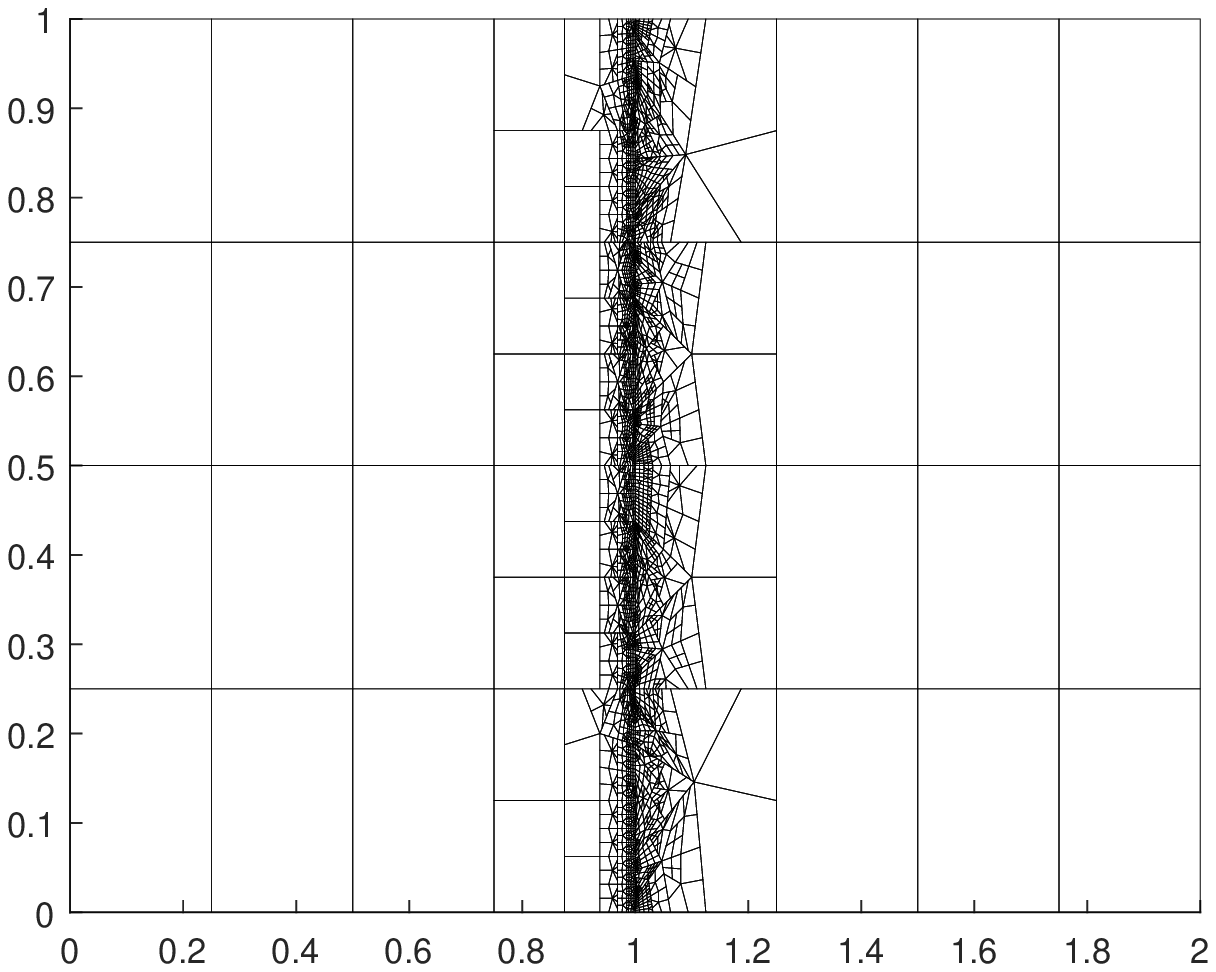}
    \includegraphics[width=0.35\textwidth]{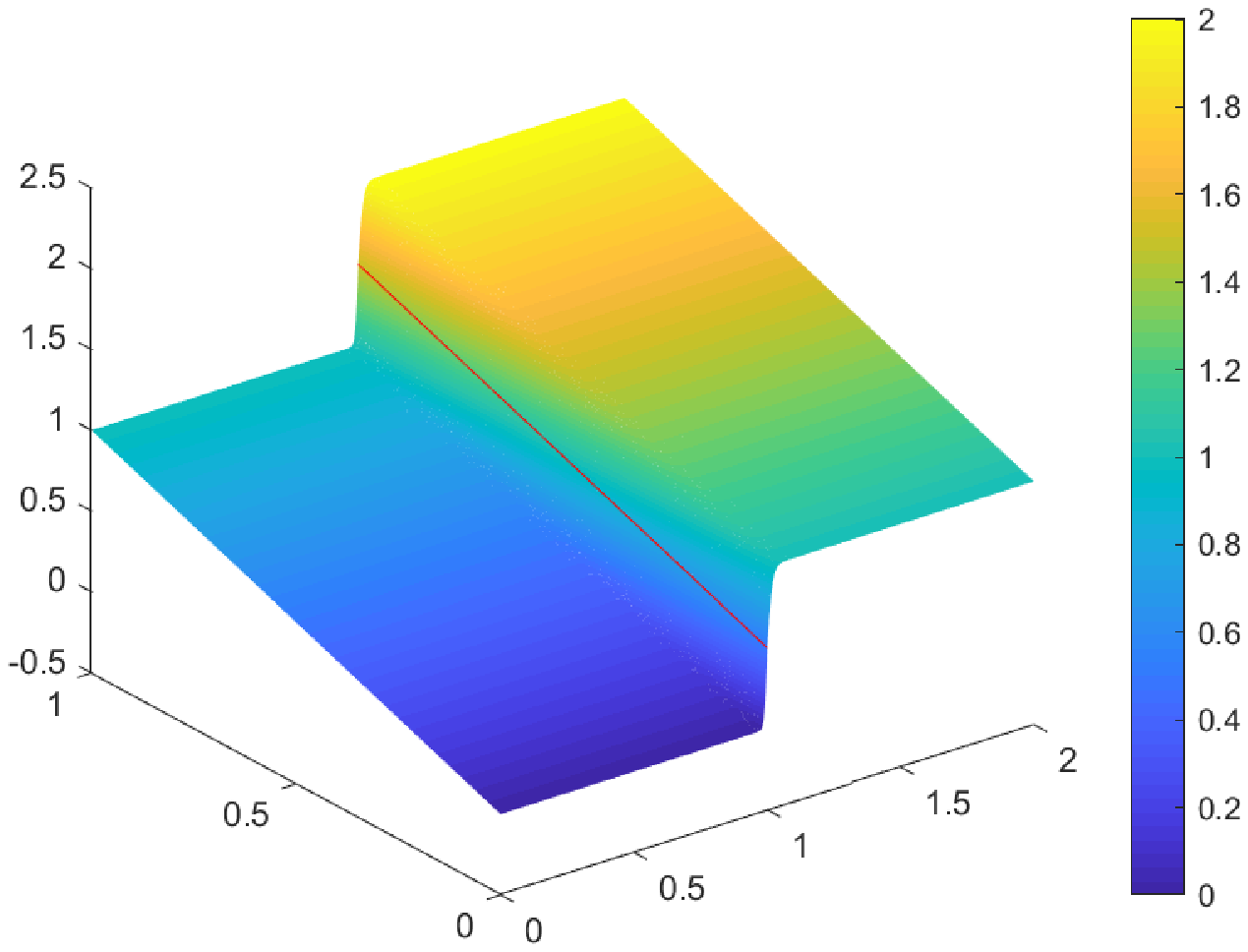}
    \caption{Adaptive mesh patter for $k=2, \alpha=0.01$ (left) and the corresponding numerical approximation for pressure (right) for the first case of Example~\ref{example1}.}
    \label{ex1-case1-mesh}
\end{figure}

\begin{figure}[t]
    \centering
    \includegraphics[width=0.32\textwidth]{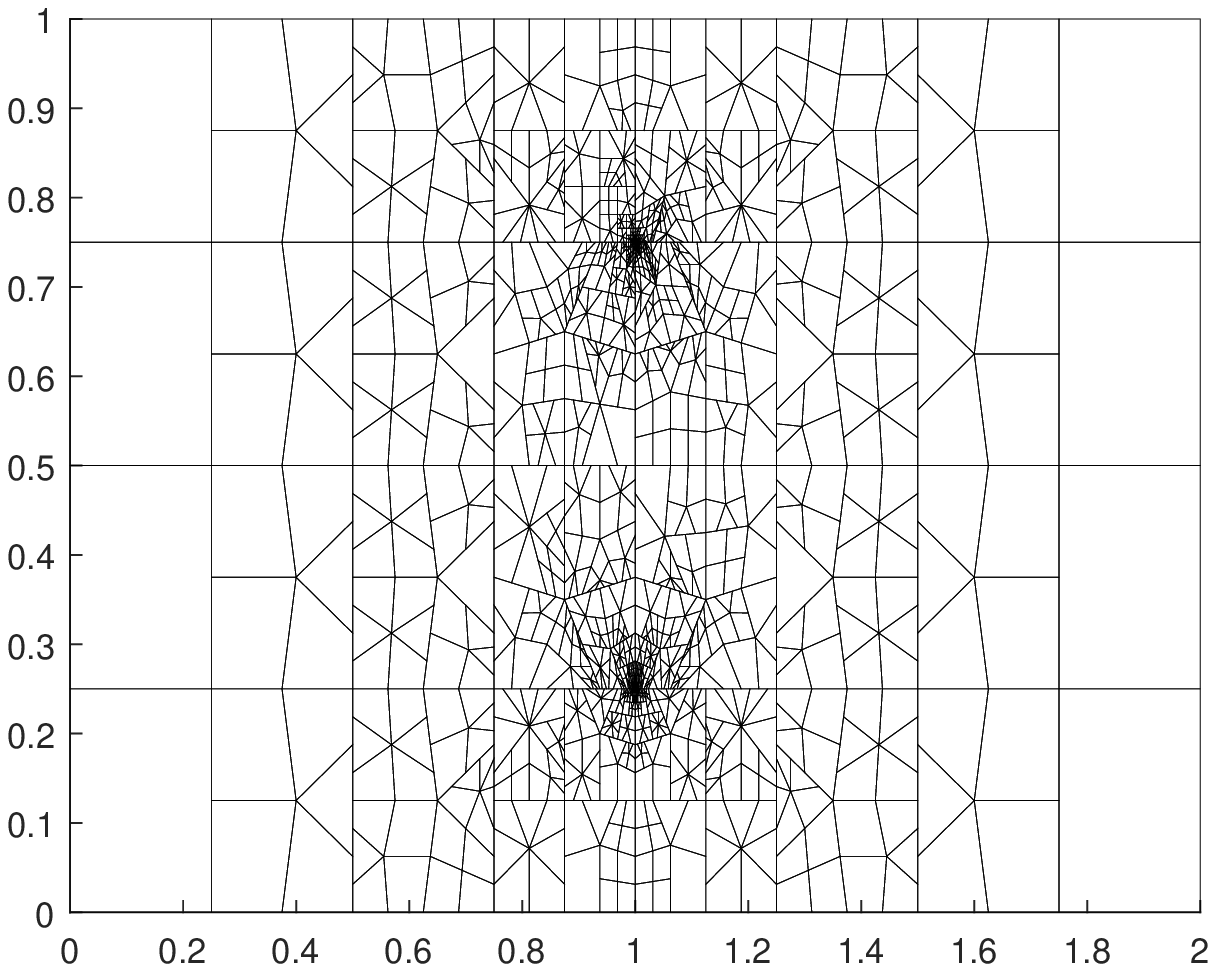}
    \includegraphics[width=0.32\textwidth]{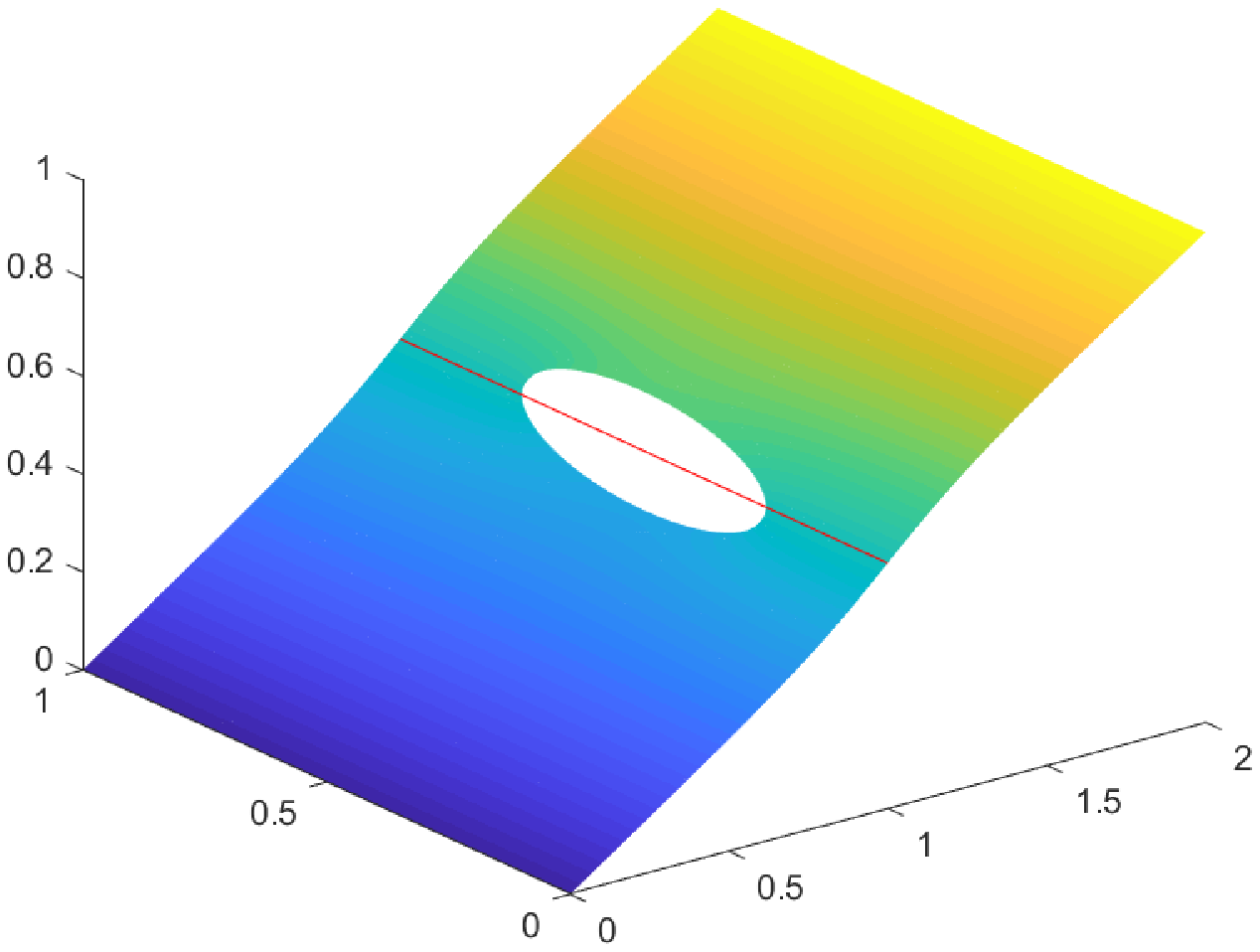}
    \includegraphics[width=0.32\textwidth]{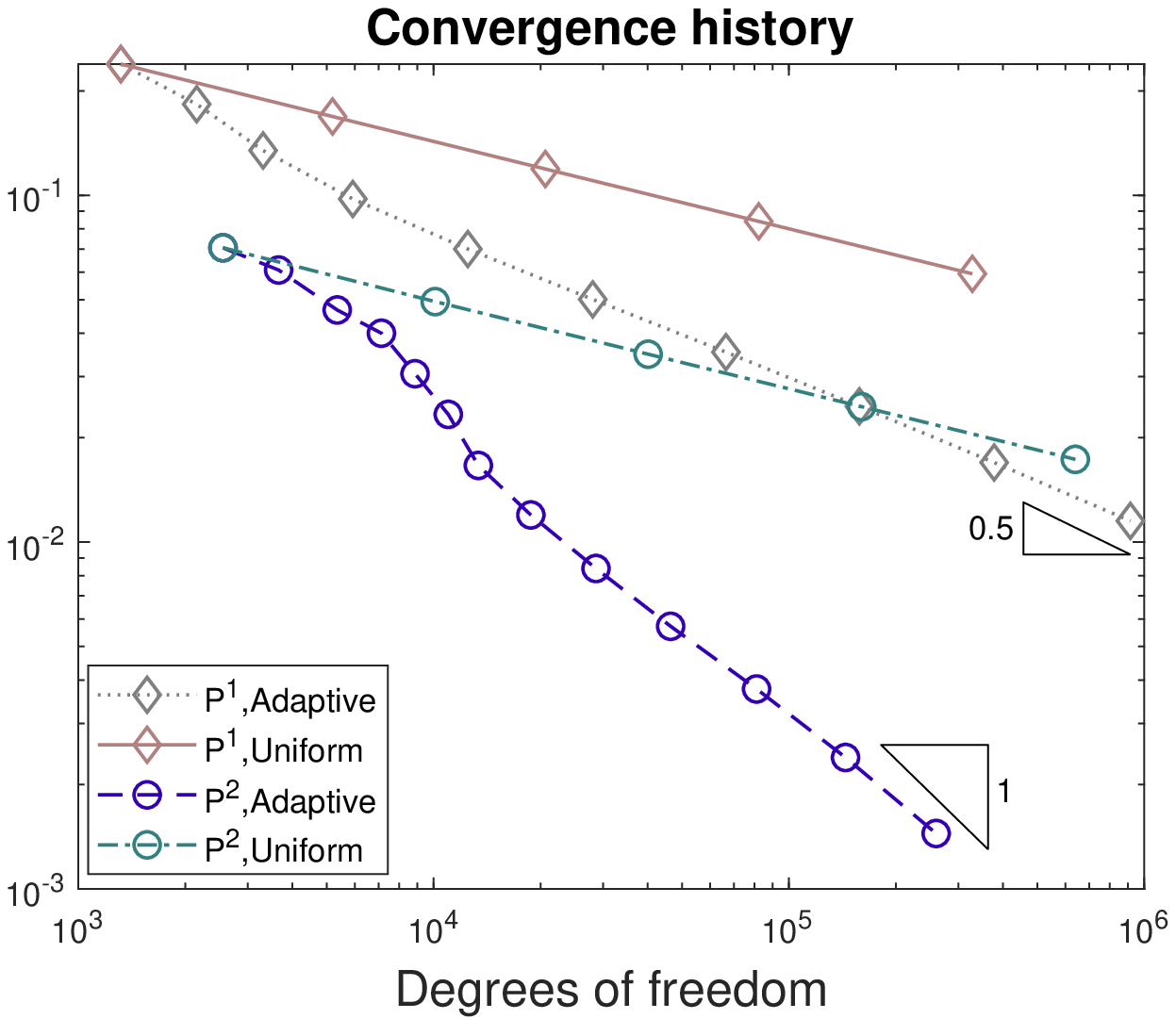}
    \caption{Adaptive mesh patter for $k=2$ (left), the corresponding numerical approximation for pressure (middle) and the convergence history for $\eta$ under unform mesh refinement and adaptive mesh refinement for the second case of Example~\ref{example1}.}
    \label{ex1-case2}
\end{figure}


\subsection{Single fracture on $L$-shaped domain}\label{example2}

We consider the $L$-shaped domain $\Omega_B = ([0,2]\times[-1,1])\backslash ([0,1]\times[-1,0])$. The fractures lie on the polygonal line $\Gamma_1\cup\Gamma_2\cup\Gamma_3$, where $\Gamma_1=(\{1/2\}\times[1/2,1])\cup([1/2,1]\times\{1/2\})$, $\Gamma_2=([1,3/2]\times \{1/2\})\cup(\{3/2\}\times[0,1/2])$ and $\Gamma_3=\{3/2\}\times [-1,0]$. The normal permeability in the fracture is given by $\kappa_\Gamma^n=\kappa_\Gamma^*=100$ on $\Gamma_1\cup\Gamma_3$ and $\kappa_\Gamma^n=\kappa_\Gamma^*=0.001$ on $\Gamma_2$. For the surrounding porous media, the Dirichlet boundary condition is given by $p=1$ on $[0,2]\times \{1\}$ and $p=0$ on $[1,2]\times \{-1\}$, and homogeneous Neumann boundary condition is imposed on the remaining part of $\partial \Omega_B$. In addition, we impose Dirichlet boundary condition for the fracture boundaries, where $p_\Gamma=1$ at $(1/2,1)$ and $p_\Gamma=0$ at $(3/2,-1)$.

The numerical approximation for pressure (cf. Figure~\ref{ex2-result}) experiences jump across $\Gamma_2$ due to the low permeability. The mesh is locally refined near $\Gamma_2$, the corner point $(1,0)$ and the two end points of $\Gamma_2$, see Figure~\ref{ex2-result}. Moreover, we display the convergence history against the number of degrees of freedom for uniform refinement and adaptive refinement, as expected, optimal convergence rates can be recovered by the adaptive mesh refinement. Here we only show the numerical results for $k=1$ for the sake of simplicity. The superiority of adaptive mesh refinement can be easily observed from the convergence history.

\begin{figure}[t]
    \centering
    \includegraphics[width=0.32\textwidth]{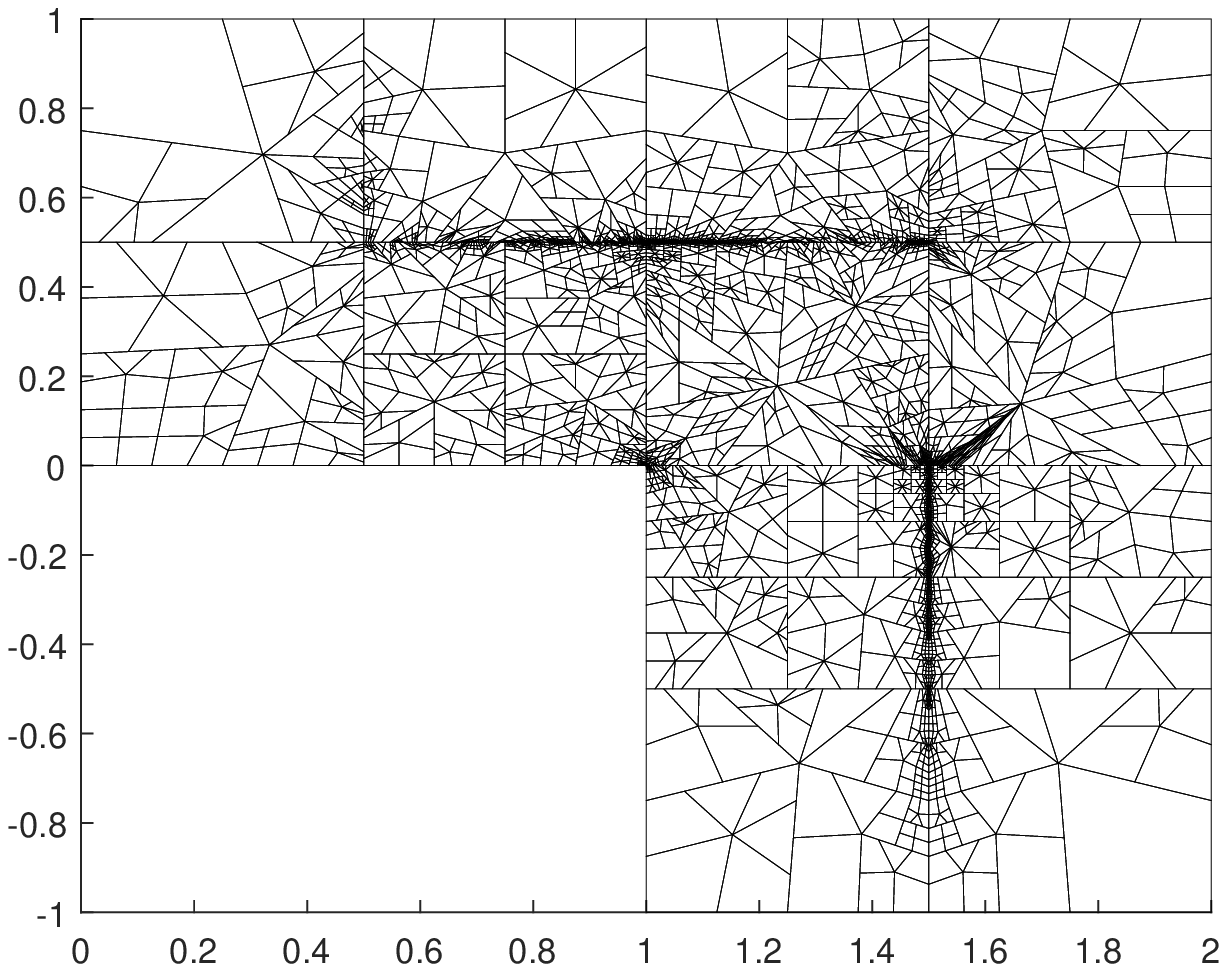}
    \includegraphics[width=0.32\textwidth]{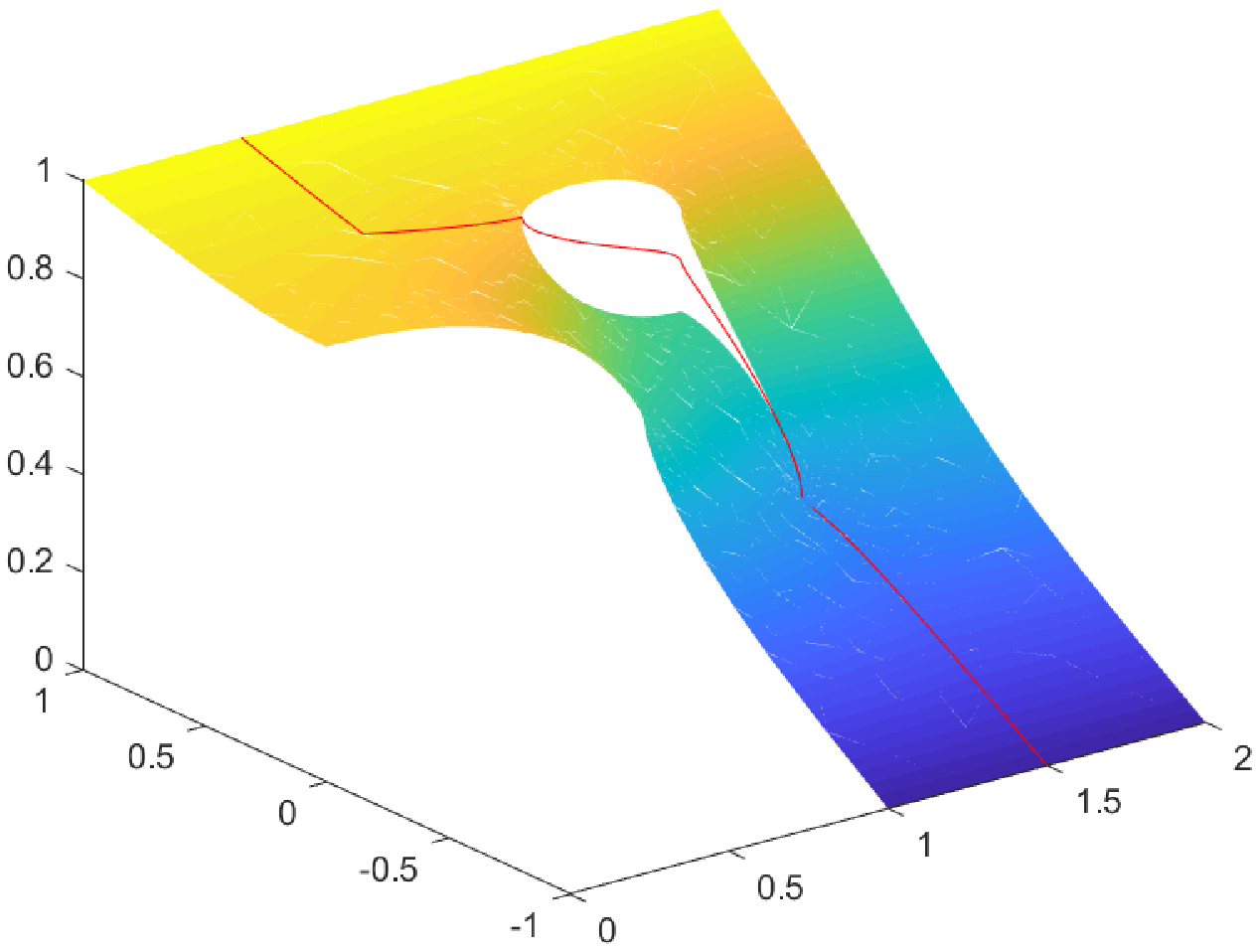}
     \includegraphics[width=0.32\textwidth]{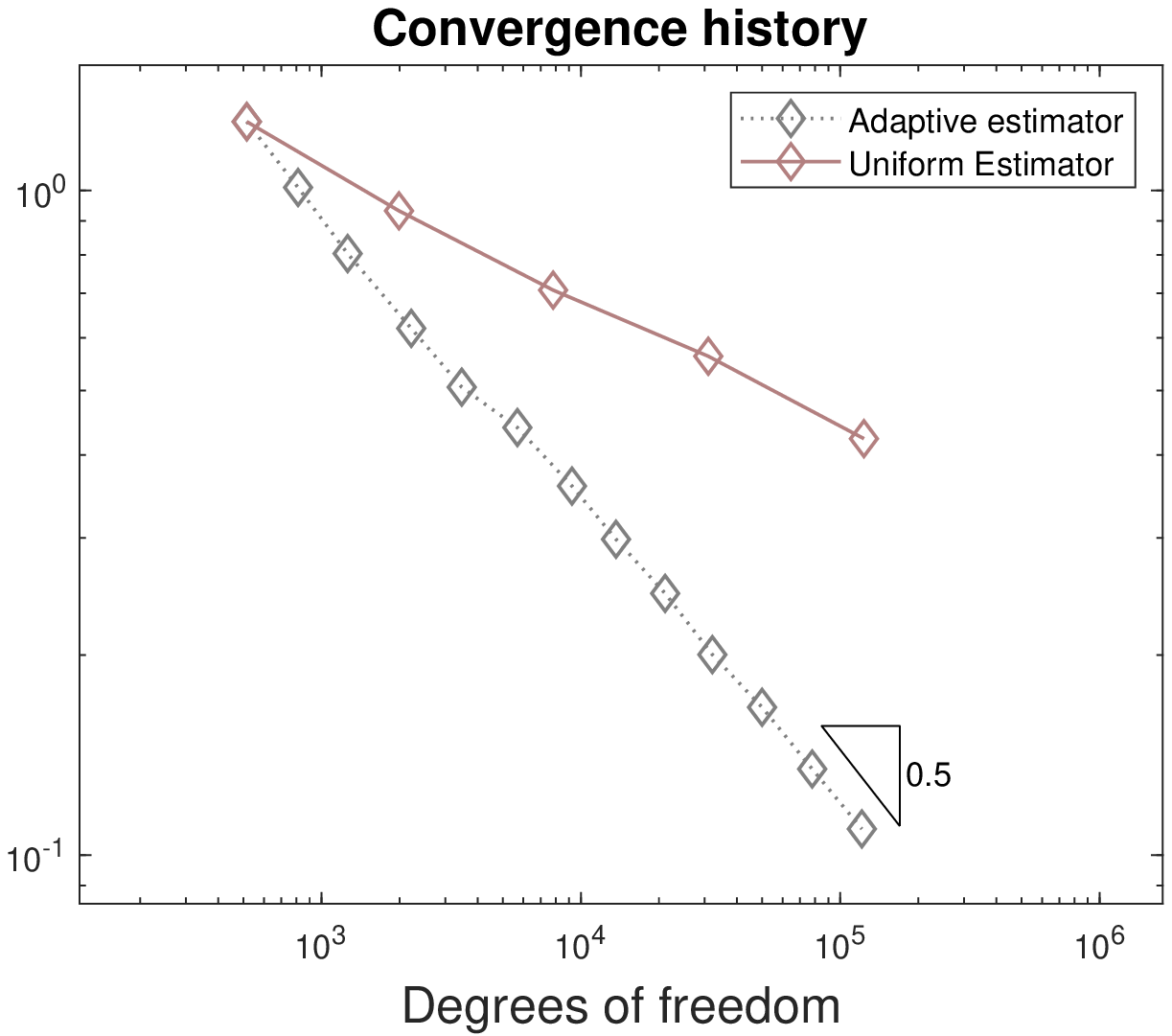}
    \caption{Adaptive mesh patter (left), the corresponding numerical approximation for pressure (middle) and the convergence history for Example~\ref{example2}.}
    \label{ex2-result}
\end{figure}

\subsection{Multiple non-intersecting fractures on $L$-shaped domain}\label{example3}

In this example, we consider more fractures totally or partially immersed in the fractured porous media. We again use the $L$-shaped domain defined in Example~\ref{example2} and the fractures lie on the lines $\Gamma_1,\Gamma_2,\Gamma_3$ and $\Gamma_4$, where $\Gamma_1=[1/2,1]\times \{1/2\}$, $\Gamma_2=\{3/2\}\times [1/2,1]$, $\Gamma_3=[3/2,2]\times \{0\}$ and $\Gamma_4=\{3/2\}\times [-1,-1/2]$. The permeability in the fracture is given by $\kappa_\Gamma^n=\kappa_\Gamma^*=100$ on $\Gamma_1$ and $\Gamma_4$, $\kappa_\Gamma^n=\kappa_\Gamma^*=0.001$ on $\Gamma_2$ and $\kappa_\Gamma^n=\kappa_\Gamma^*=0.01$ on $\Gamma_3$. For the surrounding porous media domain, we enforce Dirichlet boundary condition by $p=1$ on $\{0\}\times[0,1]$ and $p=0$ on $[1,2]\times \{-1\}$. In addition, homogeneous Dirichlet boundary condition is imposed at the point $(3/2,-1)$ for the fracture pressure and the remaining fracture boundaries are equipped by homogeneous Neumann boundary condition.

The numerical approximation for pressure is reported in Figure~\ref{ex3}, where we can see that the pressure is discontinuous across $\Gamma_2$ and $\Gamma_3$ due to the low permeability. The mesh is locally refined near the cornet point $(1,0)$, the interior end points of the fractures, in addition, locally refined mesh can also be observed across $\Gamma_2$ and $\Gamma_3$, which is caused by the discontinuity of the pressure. The convergence history against the number of degrees of freedom for $k=1$ under uniform refinement and adaptive refinement is displayed in Figure~\ref{ex3}, and optimal convergence rates can be recovered by adaptive mesh refinement. This example once again highlights that adaptive mesh refinement outperforms uniform mesh refinement.

\begin{figure}[t]
    \centering
    \includegraphics[width=0.32\textwidth]{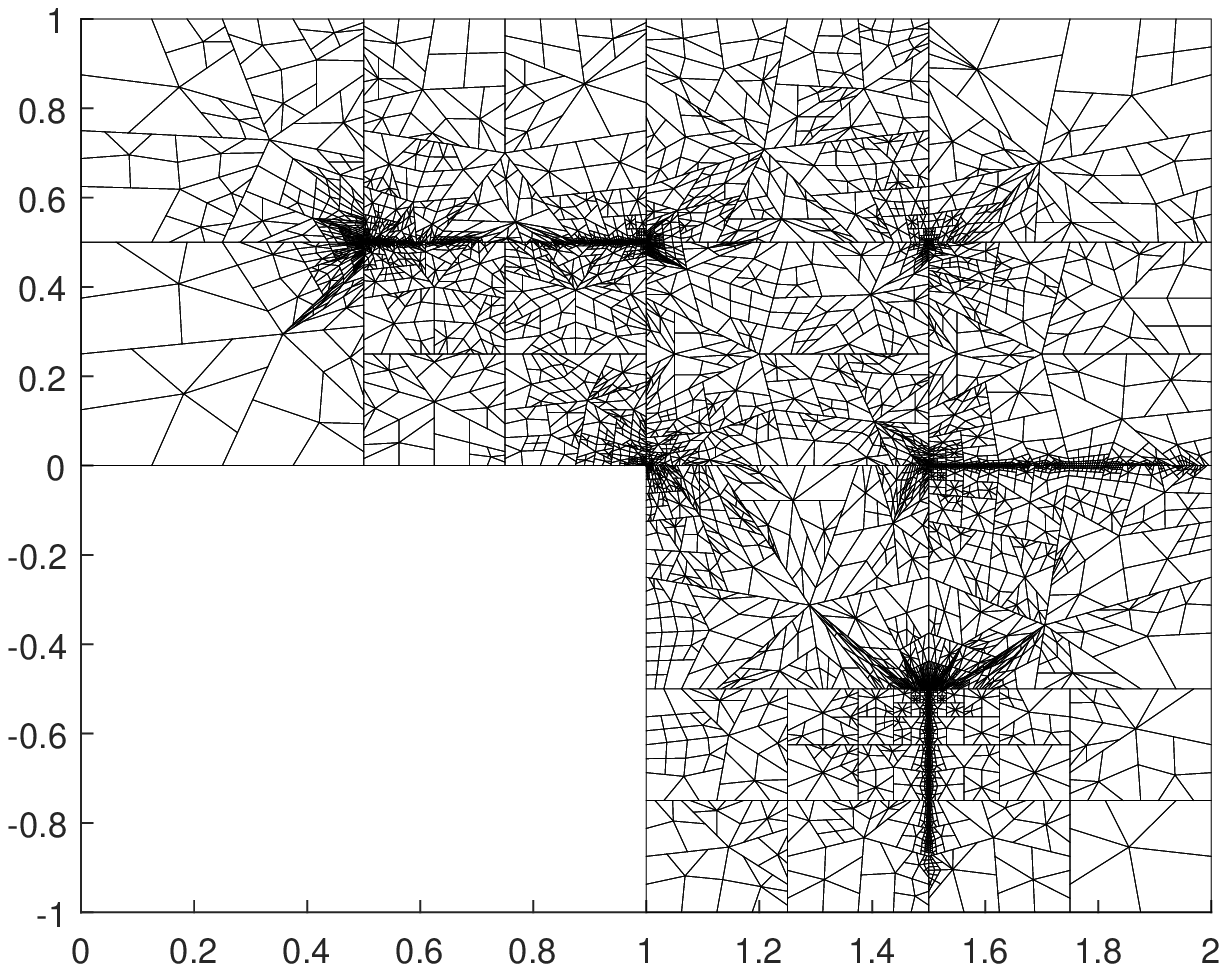}
    \includegraphics[width=0.32\textwidth]{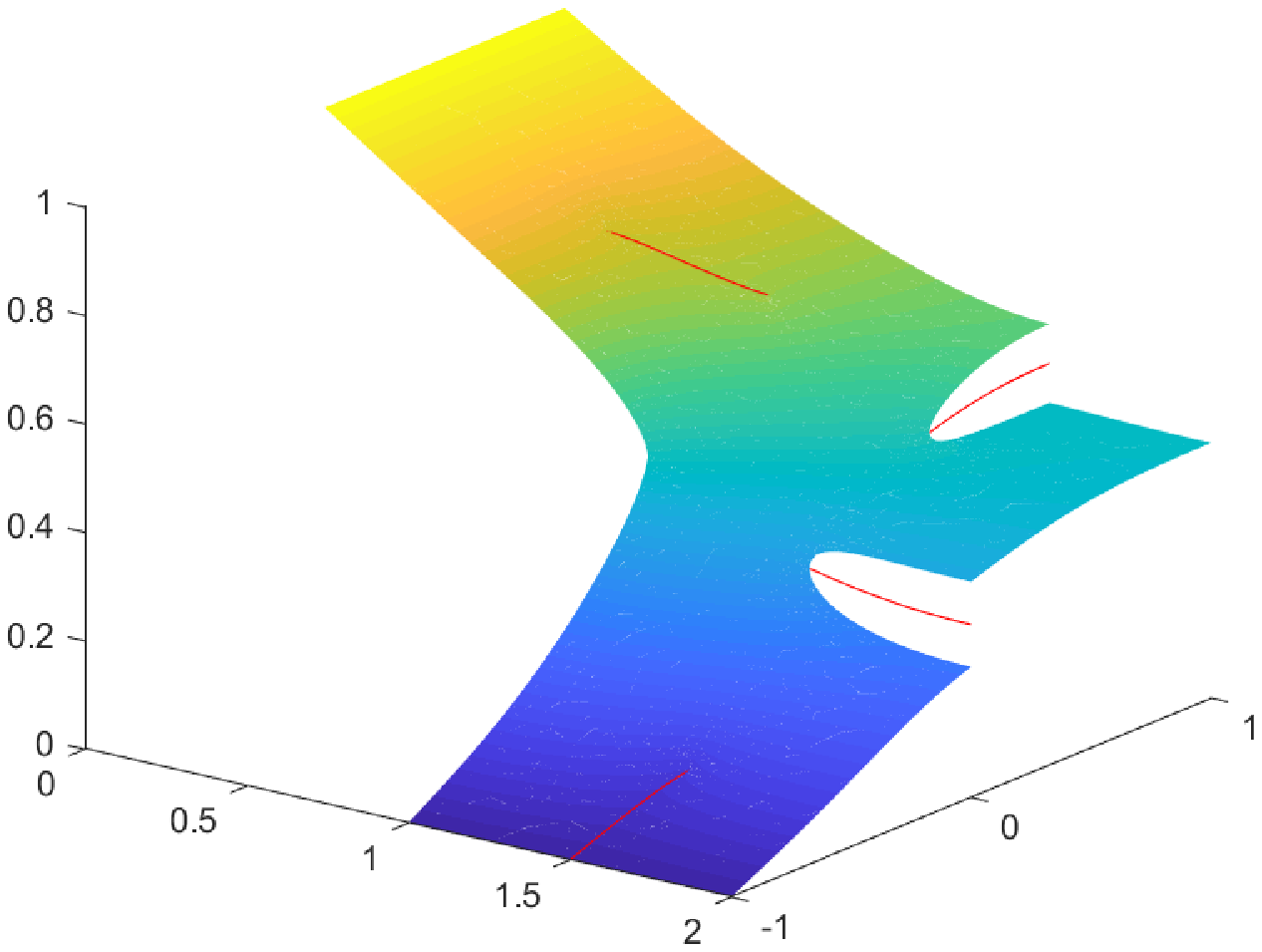}
    \includegraphics[width=0.32\textwidth]{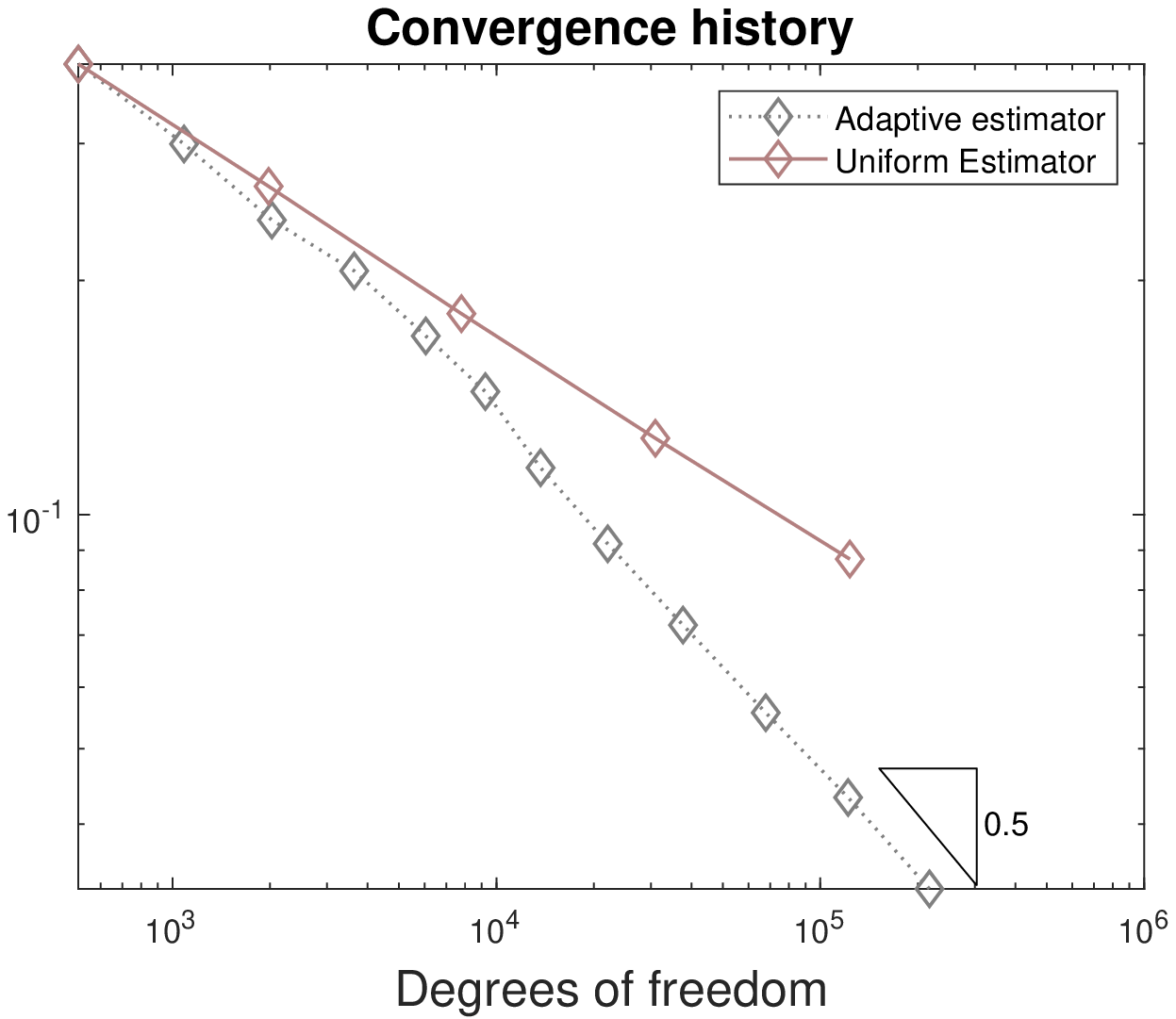}
    \caption{Adaptive mesh patter (left), the corresponding numerical approximation for pressure (middle) and the convergence history for Example~\ref{example3}.}
    \label{ex3}
\end{figure}

\section{Conclusion}

In this paper we developed a residual type error estimator for staggered DG method for Darcy flows in fractured porous media. Our methodology is based on the observation that the error for the conforming counterpart of the discrete solution can be incorporated into the stability of the continuous problem. Thereby we can estimate the error by using some sort of residual, which can be turned into the desired error estimator by combining the information achieved from the discrete formulation. Our approach is different from the one used in \cite{ChenSun17}, where a particular continuous inf-sup condition and $H(\text{div};\Omega)$-conforming interpolation operator are employed. Numerical experiments indicate that our error estimator can work well for multiple non-intersecting fractures. In the future we aim to extend our analysis to the problem with intersecting fractures.

\section*{Acknowledgments}

The research of Eric Chung is partially supported by the Hong Kong RGC General Research Fund (Project numbers 14304719 and 14302018), CUHK Faculty of Science Direct Grant 2019-20 and NSFC/RGC Joint Research Scheme (Project number HKUST620/15).

\end{document}